\documentclass[12pt]{amsart}

\setlength{\parskip}{6pt}
\setlength{\parindent}{0pt}

\usepackage{xparse} 
\usepackage{tgpagella}
\usepackage{euler}
\usepackage[T1]{fontenc}
\usepackage{amsmath, amssymb}
\usepackage[hidelinks]{hyperref}
\usepackage[english]{babel}
\usepackage{mathrsfs}
\usepackage{eucal}
\usepackage[all]{xy}
\usepackage{tikz}

\newtheorem{thm}{Theorem}
\newtheorem*{thm*}{Theorem}
\newtheorem{lem}[thm]{Lemma}
\newtheorem{fact}[thm]{Fact}

\newtheorem{prop}[thm]{Proposition}
\newtheorem*{prop*}{Proposition}

\newtheorem{cor}[thm]{Corollary}
\newtheorem*{cor*}{Corollary}

\theoremstyle{definition}
\newtheorem{defn}[thm]{Definition}
\newtheorem*{defn*}{Definition}

\newtheorem{examples}[thm]{Examples}

\newtheorem*{question*}{Question}
\newtheorem*{Pquestion*}{Popa's question}

\newtheorem*{conv*}{Convention}

\newcommand{\norm}[1]{{\left\lVert #1\right\rVert}}

\def\bb{\mathbb}

\def\bb{\mathbb}

\def\cal{\mathcal}

\def\u{\mathsf 1}

\makeatletter

\def\dotminussym#1#2{%
  \setbox0=\hbox{$\m@th#1-$}%
  \kern.5\wd0%
  \hbox to 0pt{\hss\hbox{$\m@th#1-$}\hss}%
  \raise.6\ht0\hbox to 0pt{\hss$\m@th#1.$\hss}%
  \kern.5\wd0}

\def \R{\mathcal R}
\def \u{\mathcal U}
\def \val{\operatorname{val}}

\newcommand{\mc}{\mathcal}
\newcommand{\mb}{\mathbb}

\newcommand{\cstar}{$\mathrm{C}^*$}

\newcommand{\mip}{\operatorname{MIP}}
\newcommand{\op}{\operatorname{op}}

\def \u {\mathcal U}

\textwidth 5.75in
\oddsidemargin 0.375in
\evensidemargin 0.375in

 \NewDocumentCommand{\urep}{omm}{
  \IfNoValueTF{#1}
  {\mathrm{C}^{\mbox{$*$}} \langle#2\;|\;#3\rangle}
  {\mathrm{C}^{\mbox{$*$}}\mathopen{#1\langle}#2\;#1|\;#3\mathclose{#1\rangle}}
}%
 
\NewDocumentCommand{\ugcsa}{om}{
  \IfNoValueTF{#1}
  {\mathrm{C}^{\mbox{$*$}} (#2)}
  {\mathrm{C}^{\mbox{$*$}}\mathopen{#1(}#2\mathclose{#1)}}
}%


\begin{document}


\title{Locally universal \cstar-algebras with computable presentations}

\author{Alec Fox, Isaac Goldbring, and Bradd Hart}
\thanks{Goldbring was partially supported by NSF grant DMS-2054477. Hart was funded by the NSERC.}

\address{Department of Mathematics\\University of California, Irvine, 340 Rowland Hall (Bldg.\# 400),
Irvine, CA 92697-3875}
\email{foxag@uci.edu}
\urladdr{https://www.math.uci.edu/people/alec-fox}
\email{isaac@math.uci.edu}
\urladdr{http://www.math.uci.edu/~isaac}

\address{Department of Mathematics and Statistics, McMaster University, 1280 Main St., Hamilton ON, Canada L8S 4K1}
\email{hartb@mcmaster.ca}
\urladdr{http://ms.mcmaster.ca/~bradd/}

\maketitle

\begin{abstract}
The Kirchberg Embedding Problem (KEP) asks if every \cstar-algebra embeds into an ultrapower of the Cuntz algebra $\cal O_2$.  In an effort to provide a negative solution to the KEP and motivated by the recent refutation of the Connes Embedding Problem, we establish two computability-theoretic consequences of a positive solution to KEP.  Both of our results follow from the a priori weaker assumption that there exists a locally universal \cstar-algebra with a computable presentation.
\end{abstract}

\section{Introduction}

The recent landmark quantum complexity result known as  $\mip^*=\operatorname{RE}$ \cite{MIP*} yielded a negative solution to a famous problem in the theory of von Neumann algebras, namely the \textbf{Connes Embedding Problem} (CEP). The CEP, posed in Connes' seminal paper \cite{connes}, asks if every tracial von Neumann algebra embeds into a tracial ultrapower of the hyperfinite II$_1$ factor.  The negative solution to the CEP can be used to give a negative solution to an analogous problem in the theory of \cstar-algebras known as the \textbf{Blackadar-Kirchberg Problem} (or \textbf{MF Problem}), which asked if every stably finite \cstar-algebra embeds into an ultrapower of the universal UHF algebra (see \cite[Proposition 6.1]{universal}).

The Blackadar-Kirchberg Problem can be viewed as the ``finite'' \cstar-algebra analog of CEP.  In this paper, we consider the ``infinite'' \cstar-algebra analog of CEP known as the \textbf{Kirchberg Embedding Problem} (KEP). KEP asks if every \cstar-algebra embeds into an ultrapower of the Cuntz algebra $\cal O_2$.\footnote{Incidentally, one might ask if there is an ``infinite'', that is, type III, von Neumann algebra analog of CEP.  For example, one might ask if every von Neumann algebra embeds with expectation into the Ocneanu ultrapower of the hyperfinite III$_1$ factor $\R_\infty$.  In \cite{AHW}, Ando, Haagerup, and Winslow prove that this question is equivalent to CEP itself (and thus has a negative answer).} 

The Kirchberg Embedding Problem was studied model theoretically by the second author and Sinclair in \cite{GS}.  In that paper, KEP is shown to be equivalent to the statement that there is a \cstar-algebra that is both nuclear and \textbf{existentially closed}.  Further model-theoretic equivalents of KEP were given by the second author in \cite{games}, where it was shown that  KEP is equivalent to the statement that $\cal O_2$ is the enforceable \cstar-algebra.

In this paper, we take our attention away from the nuclearity of $\cal O_2$ and instead focus on the fact that it has a so-called \textbf{computable presentation}.  Here, a presentation of a \cstar-algebra is computable if there is an algorithm that can effectively approximate the norm of all rational polynomials applied to the generators.  As mentioned in Fact \ref{alec} below, the standard presentation of $\cal O_2$ is computable.  We then derive two computability-theoretic consequences of the existence of a \textbf{locally universal} \cstar-algebra with a computable presentation, where here, a \cstar-algebra $A$ is locally universal if every \cstar-algebra embeds into an ultrapower of $A$.  We believe that neither of these consequences should hold generally and thus view these results as evidence that the KEP has a negative solution.



We now briefly describe our two main results, giving more precise statements in the two sections that follow.

Our first theorem concerns the notion of a \textbf{weakly stable} \cstar-algebra.  Roughly speaking, a finitely presented \cstar-algebra $A$ is weakly stable if any almost $*$-homomorphisms of $A$ into a \cstar-algebra $B$ is near an actual $*$-homomorphism of $A$ into $B$.  Somewhat more precisely, the finitely presented \cstar-algebra $$A=C^*\langle \bar x \ |\ p_j(x_1,\ldots,x_k)=0 \ , j=1,\ldots,m\rangle$$ (see the next section for a precise definition of this notion) is weakly stable if:  for any $\epsilon>0$, there is $\delta>0$ so that, for any \cstar-algebra $B$ and elements $z_1,\ldots,z_k\in B$ for which $\|p_j(\bar z)\|<\delta$ for all $j=1,\ldots,m$, there is a $*$-homomorphism $\varphi:A\to B$ for which $\|\varphi(x_i)-z_i\|\leq \epsilon$ for all $i=1,\ldots,k$.  Weakly stable \cstar-algebras (and other variants) have been studied by other researchers, perhaps most extensively in the foundational papers \cite{BShape} and \cite{loring}.  If we demand that this assignment $\epsilon\mapsto \delta$ is computable (in a sense we make precise in Definition \ref{cws}) below, we arrive at the notion of a \textbf{computably weakly stable} \cstar-algebra.  (To be precise, the property of being computably weakly stable is a property of a given finite presentation of a \cstar-algebra, but is preserved under computable isomorphism; see Lemma \ref{robust} below.)

Our first theorem states that if a locally universal \cstar-algebra with a computable presentation exists, then any computably weakly stable \textbf{finitely c.e. presentation} (meaning that the finitely many polynomials in the presentation have their coefficients from $\bb Q(i)$) of a \cstar-algebra is actually computable.  In short:  under the assumption that a locally universal \cstar-algebra with a computable presentation exists, an effective weak stability condition on the \cstar-algebra implies that one can effectively approximate the operator norm in the \cstar-algebra.

Our second result belongs to the area of quantum complexity theory. 
In \cite{MIP*}, a particular quantum complexity class $MIP^{co}$ is defined and it is shown that all of the languages in this class are coRE - complements of recursively enumerable sets.  It is left as an open question if the converse is true, namely, does $MIP^{co}$ coincide with the class coRE? In the notation $MIP^{co}$, the ``co'' stands for ``commuting'' and comes from the use of \textbf{commuting operator strategies} in its definition.  In this paper, we define a relaxed version $MIP^{co}_{\delta,op}$ of ``almost-commuting'' operator strategies and show that, assuming the existence of a locally universal \cstar-algebra with a computable presentation, all languages which belong to the corresponding quantum complexity class $MIP^{co}_{op,\delta}$ are actually recursively enumerable (and thus decidable!).

We end the introduction with some notation and terminology concerning ultraproducts of \cstar-algebras.  Throughout this paper, all \cstar-algebras are assumed to be unital and all $*$-homomorphisms are assumed to preserve the unit.  Moreover, $\u$ always  denotes a nonprincipal ultrafilter on $\bb N$.  Given a \cstar-algebra $C$, its \textbf{ultrapower with respect to $\u$}, denoted $C^\u$, is the quotient of the Banach algebra $\ell^\infty(\bb N,C)$ of all uniformly norm-bounded sequences from $C$ by the ideal of elements $(A_m)_{m\in \bb N}$ for which $\lim_\u \|A_m\|=0$.  It is well-known that $C^\u$ is a \cstar-algebra once again.  Given $(A_m)_{m\in \bb N}\in \ell^\infty(\bb N,C)$, we denote its coset in $C^\u$ by $(A_m)_\u$.


The authors would like to thank William Slofstra, Thomas Vidick and Henry Yuen for useful discussions regarding this work.

\section{Computable weak stability}

Let $\cal G$ be a set of noncommuting indeterminates, which we call \textbf{generators}.  By a set of \textbf{relations} for $\cal G$ we mean a set of relations of the form $\|p(x_1,\ldots,x_n)\|\leq a$, where $p$ is a $*$-polynomial in $n$ noncommuting variables with no constant term, $x_1,\ldots,x_n$ are elements of $\cal G$, and $a$ is a nonnegative real number.  We also require that, for every generator $x\in \cal G$, there is a relation of the form $\|x\|\leq M$ in $\cal R$.  A \textbf{representation} of $(\cal G,\cal R)$ is a function $j:\cal G\to A$, where $A$ is a \cstar-algebra, such that $\|p(j(x_1),\ldots,j(x_n))\|\leq a$ for every relation $\|p(x_1,\ldots,x_n)\|\leq a$ in $\cal R$.

The \textbf{universal \cstar-algebra} of $(\cal G,\cal R)$ is a \cstar-algebra $A$ along with a representation $\iota:\cal G\to A$ of $(\cal G,\cal R)$ such that, for all other representations $j:\cal G\to B$ of $(\cal G,\cal R)$, there is a unique *-homomorphism $\varphi:A\to B$ such that $\varphi(\iota(x))=j(x)$ for all $x\in \cal G$.  If the universal \cstar-algebra of $(\cal G,\cal R)$ exists, then it is unique up isomorphism and will be denoted by $C^*\langle \cal G|\cal R\rangle$.  Note that $C^*\langle \cal G|\cal R\rangle$ is generated by the image of the generators.  If $\cal G$ is a sequence $\bar x$, then we may write $C^*\langle \bar x | \cal R\rangle$ instead of $C^*\langle \bar G | \cal R\rangle$.  Given that we remain in the context of unital \cstar-algebras throughout this paper, we implicitly assume that we have a distinguished generator for the unit and include relations stating that it is a self-adjoint idempotent which acts as a multiplicative identity.

A \cstar-algebra $A$ is \textbf{finitely presented} if it is of the form $A=C^*\langle \cal G|\cal R\rangle$ for finite sets $\cal G$ and $\cal R$.

Let $C$ be a \cstar-algebra.  A \textbf{presentation} of $C$ is a pair $C^\dagger:=(C,(a_n)_{n\in \mathbb N})$, where $\{a_n \ : \ n\in \mathbb N\}$ is a subset of $C$ that generates $C$ (as a \cstar-algebra).  Elements of the sequence $(a_n)_{n\in \mathbb N}$ are referred to as \textbf{special points} of the presentation while elements of the form $p(a_{i_1},\ldots,a_{i_k})$ for $p$ a $*$-polynomial with coefficients from $\bb Q(i)$ (a \textbf{rational polynomial}) are referred to as \textbf{rational points} of the presentation.  We say that $C^\dagger$ is a \textbf{computable presentation} of $C$ if there is an algorithm such that, upon input a rational point $p$ of $C^\dagger$ and $k\in \mathbb N$, returns a rational number $q$ such that $|\|p\|-q|<2^{-k}$.
If $A^\dagger$ and $C^\dagger$ are presentations of \cstar-algebras, we say a *-homomorphism $\varphi : A \to C$ is a \textbf{computable map} from $A^\dagger$ to $C^\dagger$ if there is is an algorithim such that, upon input of a rational point $p$ of $A^\dagger$ and $k \in \mb{N}$, returns a rational point $q$ of $C^\dagger$ such that $\norm{\varphi(p) - q} < 2^{-k}$.

The \textbf{standard presentation} of a universal \cstar-algebra $C^*\langle \bar x|\cal R\rangle$ has $\bar x$ as its distinguished generating set.  A relation $\|p(x_1,\ldots,x_n)\|\leq a$ is called \textbf{rational} if $p$ is a rational polynomial and $a$ is a nonnegative dyadic rational.  A presentation $A^\dagger$ of a \cstar-algebra $A$ is called \textbf{c.e.} if it is the standard presentation $C^*\langle \bar x|\cal R\rangle$ of a c.e. set of rational relations $\cal R$.  When $\bar x$ and $\cal R$ are both finite, we say that $A^\dagger$ is \textbf{finitely c.e.}

We will need the following fact of the first-named author \cite{fox}:

\begin{fact}\label{alec}
If $A$ is a simple \cstar-algebra, then any c.e. presentation $A^\dagger$ of $A$ is computable.
\end{fact}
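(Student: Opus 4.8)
The plan is to show that for every rational point $p$ of the given presentation $A^\dagger$ the number $\|p\|$ is \emph{right-c.e.} (there is an algorithm, uniform in $p$, enumerating a decreasing sequence of rational upper bounds converging to $\|p\|$) and \emph{left-c.e.} (likewise for increasing lower bounds), and then to combine the two: running both procedures in parallel until the best current upper and lower bounds differ by less than $2^{-k}$ outputs a rational within $2^{-k}$ of $\|p\|$, so $A^\dagger$ is computable. Write $A=C^*\langle \bar x\mid \mathcal R\rangle$ where $\mathcal R$ is a c.e.\ set of rational relations $\|r_j(\bar x)\|\le a_j$ (including a bound $\|x_i\|\le M_i$ for each generator). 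The first property will use only that $\mathcal R$ is c.e.; the second is where the hypothesis that $A$ is simple enters.

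For right-c.e.-ness I would invoke the C*-algebraic (Archimedean) Positivstellensatz. Since every generator is norm-bounded by a relation, the quadratic module generated in the free $*$-algebra $\mathbb Q(i)\langle\bar x\rangle$ by $1$ and the elements $a_j^2\cdot 1-r_j^*r_j$ is Archimedean, and for a positive rational $q$ one has $\|p\|_A<q$ if and only if there is an algebraic identity
\[
q^2\cdot 1-p^*p \;=\; \varepsilon\cdot 1+\sum_l b_l^*b_l+\sum_j\sum_m d_{j,m}^*\,(a_j^2\cdot 1-r_j^*r_j)\,d_{j,m}
\]
in $\mathbb Q(i)\langle\bar x\rangle$ (modulo the relations defining the unit), with $\varepsilon>0$ rational and $b_l,d_{j,m}$ rational polynomials. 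The ``$\Leftarrow$'' direction is immediate, since applying any representation of $\mathcal R$ shows $\|p\|_A^2\le q^2-\varepsilon$; the ``$\Rightarrow$'' direction is the standard Positivstellensatz, proved by Hahn--Banach separation together with the GNS construction. Because the $a_j$ are dyadic rationals and $\mathcal R$ is c.e., the set of such certificates is c.e., so dovetailing over all rational $q$ semi-decides ``$\|p\|<q$'' and produces the desired decreasing upper bounds, uniformly in $p$.

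For left-c.e.-ness, fix the rational point $a:=p^*p$, which is positive with $\|a\|=\|p\|^2$, and fix (using the previous paragraph) a rational $M>\|a\|$. For a positive rational $r$ and $f_r(t):=\max(t-r,0)$ one has $\|a\|>r$ iff $(a-r)_+=f_r(a)\in A$ is nonzero, and, \emph{because $A$ is simple}, $(a-r)_+\neq 0$ iff the closed two-sided ideal it generates is all of $A$, iff $1$ lies in the closed span of $A(a-r)_+A$, iff there exist $u_1,v_1,\dots,u_n,v_n\in A$ with $\|\sum_i u_i(a-r)_+v_i-1\|<\tfrac14$. Now approximate $f_r$ uniformly on $[0,M]$ by a rational polynomial $g$ with $\|f_r-g\|_{\infty,[0,M]}<\eta$; then $\|g(a)-(a-r)_+\|<\eta$ and $g(a)$ is a rational point. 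Thus $\|a\|>r$ will be witnessed by the existence of rational points $u_i,v_i$, a rational polynomial $g$, and a rational $\eta$ with $\|f_r-g\|_{\infty,[0,M]}<\eta$, with $\eta\,\beta<\tfrac{1}{12}$ for a computed upper bound $\beta$ on $\sum_i\|u_i\|\,\|v_i\|$, and with $\|\sum_i u_i\,g(a)\,v_i-1\|<\tfrac14$; the last condition is the norm of a rational point, so by right-c.e.-ness it is verifiable when true. If such data exists then $\|\sum_i u_i(a-r)_+v_i-1\|<\tfrac14+\eta\beta<\tfrac13$, which is impossible if $(a-r)_+=0$, so $\|a\|>r$; conversely, if $\|a\|>r$ then by simplicity suitable (rational) data exists with room to spare. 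Dovetailing over all rational $r$ and all such finite data semi-decides ``$\|a\|>r$'', yielding increasing rational lower bounds on $\|a\|=\|p\|^2$, hence on $\|p\|$, uniformly in $p$.

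The main obstacle is the Positivstellensatz of the second step: the passage from ``$q^2\cdot 1-p^*p$ is strictly positive in the universal C*-algebra'' to the existence of a \emph{finite algebraic} certificate over $\mathbb Q(i)$. That is where the analytic content of the argument is concentrated; once it is available, the remainder is a matter of dovetailing searches and bookkeeping with $\varepsilon$'s, and simplicity is used only through the elementary fact that a nonzero positive element of a simple unital C*-algebra generates the whole algebra as a closed two-sided ideal.
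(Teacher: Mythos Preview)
The paper does not prove this fact; it records it as a result of the first-named author and cites \cite{fox} without argument, so there is no in-paper proof to compare against. Your outline is correct and matches the approach of \cite{fox}: the right-c.e.\ half is precisely \cite[Theorem~3.3]{fox} (which the present paper invokes twice elsewhere), and for the left-c.e.\ half one uses exactly your idea---in a simple unital \cstar-algebra a positive element $b$ is nonzero iff $1$ lies in the closed span of $AbA$, which is witnessed by finite rational data one can search for after replacing $(a-r)_+$ by a nearby rational polynomial in $a$ and verifying the resulting norm inequality via the right-c.e.\ procedure. This is the \cstar-analogue of Kuznetsov's theorem that a finitely generated simple group with a c.e.\ presentation has decidable word problem, and that analogy is the organizing theme of \cite{fox}.

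One small point worth making explicit: since $\mathcal R$ is only c.e.\ (not finite), your search for Positivstellensatz certificates must be dovetailed with the enumeration of $\mathcal R$; at each stage only finitely many relations are available. You already do this implicitly, and it causes no trouble because any certificate uses only finitely many relations.
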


In particular, given that the standard presentation of $\cal O_2$ is clearly (finitely) c.e., we conclude from the previous fact that it is a computable presentation.

It is sensible to write a relation of the form $\|p(x_1,\ldots,x_n)\|\leq 0$ in the more familiar form $p(x_1,\ldots,x_n)=0$.  Note also that in any c.e. presentation of a \cstar-algebra, we can replace an arbitrary relation of the form $\norm{p(x_1,\ldots,x_n)} \leq a$ by the two relations $p(x_1,\ldots,x_n) - g = 0$ and $\norm{g} \leq a$, where $g$ is a new generator.

\begin{defn}
    A finitely presented \cstar-algebra $A = C^*\langle \overline{x} \;|\; p_i(\overline{x}) = 0, \norm{x_k} \leq C_k\rangle$ is \textbf{weakly stable} if for every $\epsilon > 0$ there is a $\delta > 0$ such that for all \cstar-algebras $B$ if $\overline{z} \in B$ satisfies $\norm{p_i(\overline{z})} \leq \delta$ for all $i$ and $\norm{z_k} \leq C_k + \delta$ for all $k$, then there exists a *-homorphism $\varphi : A \to B$ such that $\norm{\varphi(x_k) - z_k} \leq \epsilon$ for all $k$.
\end{defn}

The following are all examples of weakly stable \cstar-algebras; see \cite{BOp} for proofs.

\begin{examples}

\

\begin{enumerate}
    \item $\ugcsa{\mb{F}_n}$, the full universal \cstar-algebra of the free group on $n$ generators.
    \item $M_n(\mb{C})$, or, more generally, any finite-dimensional \cstar-algebra.
    \item Cuntz algebras $\mc{O}_n = \urep{s_1,\ldots,s_n}{s_i^*s_j = \delta_{ij}1, \sum s_is_i^* = 1}$.
    \item The Toeplitz algebra $\mc{T}$, the universal \cstar-algebra generated by an isometry.
\end{enumerate}
\end{examples}

Notice that in all of the above examples, the \cstar-algebra is either nuclear or else not simple.  In fact, we have:

\begin{prop}
If KEP has a positive solution, then any simple weakly stable \cstar-algebra is exact.
\end{prop}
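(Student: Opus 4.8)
The plan is to show something slightly stronger than exactness: under KEP, a nonzero simple weakly stable \cstar-algebra embeds into $\cal O_2$ itself. Since $\cal O_2$ is nuclear, hence exact, and exactness passes to \cstar-subalgebras, this gives the proposition. So write $A = C^*\langle \bar x \;|\; p_i(\bar x)=0,\ \|x_k\|\leq C_k\rangle$ for a weakly stable finite presentation (the trivial case $A=0$ being clear). By KEP there is an embedding $\iota : A \to \cal O_2^\u$. Lift the elements $\iota(x_k)$ to uniformly norm-bounded sequences $(z_k^{(m)})_{m\in\bb N}\in\ell^\infty(\bb N,\cal O_2)$. Because $\iota$ is a unital $*$-homomorphism and each $p_i$ is a $*$-polynomial with no constant term, $p_i(\iota(\bar x))=\iota(p_i(\bar x))=0$, so $\lim_\u\|p_i(\bar z^{(m)})\|=0$ for every $i$; similarly $\lim_\u\|z_k^{(m)}\| = \|\iota(x_k)\| \leq \|x_k\|_A \leq C_k$ for every $k$.

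Now fix any $\epsilon>0$ (say $\epsilon=1$) and let $\delta>0$ be the constant supplied by weak stability. The set of $m$ for which $\|p_i(\bar z^{(m)})\|\leq\delta$ for all $i$ and $\|z_k^{(m)}\|\leq C_k+\delta$ for all $k$ is a finite intersection of sets in $\u$, hence lies in $\u$ and in particular is nonempty; fix such an $m$. Weak stability then yields a $*$-homomorphism $\varphi : A \to \cal O_2$ with $\|\varphi(x_k)-z_k^{(m)}\|\leq\epsilon$. The point is now that $\varphi$ is unital, so $\varphi(1)=1\neq 0$; since $A$ is simple, $\ker\varphi$ is a closed two-sided ideal strictly smaller than $A$, hence $\ker\varphi=0$. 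Thus $\varphi$ embeds $A$ into $\cal O_2$, and exactness of $A$ follows.

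The argument is short, and the only points requiring a little care are (i) arranging the norm bound $\|z_k^{(m)}\|\leq C_k+\delta$ on the lifted sequence, which is automatic since it only needs to hold for $\u$-almost all $m$ and $\lim_\u\|z_k^{(m)}\|\leq C_k$; and (ii) the observation that simplicity together with unitality of $\varphi$ forces $\varphi$ to be injective. Step (ii) is really the heart of the matter: weak stability only guarantees the existence of \emph{some} honest $*$-homomorphism near an approximate representation, and it is simplicity that upgrades this to an \emph{embedding}. Note that the $\epsilon$-closeness of $\varphi$ to $\bar z^{(m)}$ is not used beyond ensuring that $\varphi$ exists. I expect no genuine obstacle here; the substance of the result is conceptual, namely recognizing that weak stability lets one descend an embedding into $\cal O_2^\u$ to an embedding into $\cal O_2$, at which point exactness is inherited from the nuclearity of $\cal O_2$.
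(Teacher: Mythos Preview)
Your proof is correct and follows exactly the same approach as the paper's: embed $A$ into $\cal O_2^\u$ via KEP, use weak stability (together with finite presentability) to descend to a $*$-homomorphism $A\to\cal O_2$, invoke simplicity to conclude this map is an embedding, and deduce exactness from that of $\cal O_2$. The paper's proof is simply a terse version of yours, leaving implicit the lifting of generators and the choice of a coordinate $m$ where the relations are $\delta$-approximately satisfied.
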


\begin{proof}
Suppose that $A$ is a simple, weakly stable \cstar-algebra.  By a positive solution to KEP, there is an embedding $A\hookrightarrow \cal O_2^\u$.  Since $A$ is weakly stable (recall that $A$ is finitely presented and hence there are only finitely many conditions to be satisfied), there is a *-homomorphism $A\to \cal O_2$, which, since $A$ is simple, must be an embedding.  It follows that $A$ is exact.
\end{proof}

We now introduce a computable version of weak stability.

\begin{defn}\label{cws}
    A finitely c.e. presentation $A^\dagger = C^*\langle \overline{x} \;|\; p_i(\overline{x}) = 0, \norm{x_k} \leq C_k\rangle$ is \textbf{computably weakly stable} if there is an algorithm which when given $n \in \mathbb{N}$ returns $m \in \mathbb{N}$ such that for all \cstar-algebras $B$ if $\overline{z} \in B$ satisfies $\norm{p_i(\overline{z})} \leq 2^{-m}$ for all $i$ and $\norm{z_k} \leq C_k + 2^{-m}$ for all $k$, then there exists a *-homorphism $\varphi : A \to B$ such that $\norm{\varphi(x_k) - z_k} < 2^{-n}$ for all $k$.
\end{defn}

While the definition as written depends on the choice of relations, we show in the following lemma the notion is computably robust, so any finite set of rational relations that gives the same presentation will work.

\begin{lem}\label{robust}
    Let $$A^\dagger= \urep{x_1,\ldots,x_j}{p_i(\overline{x}) = 0, \norm{x_k} \leq C_k}$$ and $$A^\#= \urep{y_1,\ldots,y_\ell}{q_i(\overline{y}) = 0, \norm{y_k} \leq D_k}$$ be finitely c.e. presentations of a \cstar-algebra $A$.
    If there exists an isomorphism computable from $A^\#$ to $A^\dagger$ and $A^\dagger$ is computably weakly stable, then so is $A^\#$.
\end{lem}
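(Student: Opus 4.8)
The plan is the natural one: classically, weak stability is a property of the \cstar-algebra rather than of the presentation, proved by transporting an approximate representation of one presentation to an approximate representation of the other, and the task here is to make that argument effective. Write $\varphi\colon A\to A$ for the given $*$-isomorphism, viewed as a computable map from $A^\#$ to $A^\dagger$, and recall that $*$-isomorphisms of \cstar-algebras are isometric. A preliminary observation is that $\varphi^{-1}$ is computable as well, now from $A^\dagger$ to $A^\#$: given a rational point $p$ of $A^\dagger$ and $k\in\bb N$, search over rational points $q$ of $A^\#$, computing for each a rational point $\tilde q$ of $A^\dagger$ with $\norm{\varphi(q)-\tilde q}<2^{-k-2}$, and --- using that in a finitely c.e. presentation one can compute arbitrarily good upper bounds for the norm of any rational point, uniformly in the presentation --- wait until a computed upper bound on $\norm{p-\tilde q}$ drops below $2^{-k-1}$; then $q$ witnesses $\norm{\varphi^{-1}(p)-q}=\norm{p-\varphi(q)}<2^{-k}$, and since $\varphi$ is an isometric bijection and $\overline y$ generates $A$, such a $q$ exists once $q$ is close enough to $\varphi^{-1}(p)$, so the search halts. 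Hence we can compute, to any prescribed precision, rational polynomials $s_1(\overline y),\dots,s_j(\overline y)$ with $\norm{\varphi^{-1}(x_k)-s_k(\overline y)}$ small and rational polynomials $r_1(\overline x),\dots,r_\ell(\overline x)$ with $\norm{\varphi(y_k)-r_k(\overline x)}$ small.

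Given an input $n$, the algorithm returning $m$ is as follows. Using $\varphi^{-1}\circ\varphi=\id$, the identities $p_i(\varphi^{-1}(\overline x))=\varphi^{-1}(p_i(\overline x))=0$ and $\norm{\varphi^{-1}(x_k)}=\norm{x_k}\le C_k$, and the elementary (computable) Lipschitz bounds for $*$-polynomials on balls of radius $\max_k C_k+1$, fix in order: a tolerance $\e_1$ and the polynomials $r_k$; an auxiliary precision $n'$ large enough to absorb the Lipschitz errors and the $2^{-n'}$ slack in the conclusion of $A^\dagger$'s computable weak stability; the value $m_0:=m^\dagger(n')$, where $m^\dagger$ is the algorithm witnessing that $A^\dagger$ is computably weakly stable; and a tolerance $\e_2$ and the polynomials $s_k$. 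Form the rational polynomials $P_i(\overline y):=p_i(s_1(\overline y),\dots,s_j(\overline y))$ and $Q_k(\overline y):=r_k(s_1(\overline y),\dots,s_j(\overline y))-y_k$. The Lipschitz estimates and the identities above force $\norm{P_i(\overline y)}_A<2^{-m_0}$, $\norm{s_k(\overline y)}_A<C_k+2^{-m_0}$, and $\norm{Q_k(\overline y)}_A<\theta$, where $\theta$ is chosen so that $\e_1+(\text{Lip of }r_k)\,2^{-n'}+\theta<2^{-n}$ (the precise choice of $\e_1,n',\e_2,\theta$ is routine bookkeeping). Let $A^\#_{2^{-m}}$ denote the universal \cstar-algebra on $\overline y$ with the relaxed relations $\norm{q_i(\overline y)}\le 2^{-m}$ and $\norm{y_k}\le D_k+2^{-m}$; these are finitely c.e. presentations uniformly in $m$, so their rational-point norms are computable from above uniformly in $m$, and since $\overline y\mapsto\overline y$ exhibits $A$ as a quotient of each $A^\#_{2^{-m}}$, the relaxed norms decrease to the corresponding norms in $A$ as $m\to\infty$. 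The algorithm dovetails a search over $m$ and stages until the computed upper bounds certify, for all $i,k$, that $\norm{P_i}_{A^\#_{2^{-m}}}\le 2^{-m_0}$, $\norm{s_k}_{A^\#_{2^{-m}}}\le C_k+2^{-m_0}$, and $\norm{Q_k}_{A^\#_{2^{-m}}}\le\theta$; since the relevant norms in $A$ lie strictly below these thresholds, such an $m$ exists, and the algorithm outputs it.

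For correctness: if $\overline w\in B$ satisfies $\norm{q_i(\overline w)}\le 2^{-m}$ and $\norm{w_k}\le D_k+2^{-m}$, then $\overline y\mapsto\overline w$ extends to a $*$-homomorphism $A^\#_{2^{-m}}\to B$, so with $z_k:=s_k(\overline w)$ one gets $\norm{p_i(\overline z)}\le\norm{P_i}_{A^\#_{2^{-m}}}\le 2^{-m_0}$ and $\norm{z_k}\le\norm{s_k}_{A^\#_{2^{-m}}}\le C_k+2^{-m_0}$. Computable weak stability of $A^\dagger$ at precision $2^{-n'}$ then yields $\psi_1\colon A\to B$ with $\norm{\psi_1(x_k)-z_k}<2^{-n'}$. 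Setting $\psi:=\psi_1\circ\varphi$ and inserting $\psi_1(r_k(\overline x))$ and $r_k(\overline z)$, and using that $\psi_1$ is contractive and that $Q_k(\overline w)=r_k(\overline z)-w_k$, one obtains $\norm{\psi(y_k)-w_k}\le\norm{\varphi(y_k)-r_k(\overline x)}+(\text{Lip of }r_k)\,2^{-n'}+\norm{Q_k}_{A^\#_{2^{-m}}}<2^{-n}$, as required.

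The crux is the effectivity of the transfer step. The convergence $\norm{\cdot}_{A^\#_{2^{-m}}}\to\norm{\cdot}_A$ holds --- one sees this from a compactness/ultraproduct argument --- but a priori carries no computable modulus. The point is that no modulus is needed: it suffices that these relaxed norms are uniformly computable from above together with the a priori knowledge, supplied by the soft Lipschitz estimates and the identities $p_i(\varphi^{-1}(\overline x))=0$ and $\varphi^{-1}\varphi=\id$, that the relevant limiting norms lie strictly below the thresholds we require, so that the unbounded search is guaranteed to halt. The secondary point where care is needed is the computability of $\varphi^{-1}$, for which upper-semicomputability of norms in finitely c.e. presentations and the isometry of $\varphi$ are used.
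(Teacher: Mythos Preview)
Your proof is correct and follows the same overall architecture as the paper: introduce the relaxed universal algebras $A^\#_{2^{-m}}$ (the paper calls them $W_m$), use upper-semicomputability of norms in finitely c.e.\ presentations to search for an $m$ at which certain polynomial norms drop below prescribed thresholds, and invoke an ultraproduct-type argument to guarantee the search halts. The correctness triangle inequality is also the same.

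The one genuine difference is in how you obtain the polynomials expressing the $x$-generators in terms of the $y$-generators. The paper does \emph{not} invert the isomorphism: it searches, simultaneously with $m$, over all candidate rational polynomials $r_1(\overline w),\dots,r_j(\overline w)$ in $W_m$, accepting the first tuple satisfying the required norm bounds; halting is then argued via the embedding $A\hookrightarrow\prod_{\mathcal U} W_m$ sending $\psi(y_k)\mapsto w_k$. You instead first prove that $\varphi^{-1}$ is itself a computable map from $A^\dagger$ to $A^\#$, which lets you \emph{precompute} fixed polynomials $s_k(\overline y)$ approximating $\varphi^{-1}(x_k)$ and reduces the subsequent search to one over $m$ alone. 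Your inversion argument is a nice observation in its own right (it relies on isometry of $\varphi$ plus upper-semicomputability of norms), and it makes the bookkeeping slightly cleaner since the polynomials are fixed before the search begins; the paper's approach, on the other hand, avoids that preliminary step at the cost of a larger search space. Either way the core mechanism---the convergence $\|\cdot\|_{W_m}\to\|\cdot\|_A$ together with strict thresholds chosen via Lipschitz estimates---is identical.
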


\begin{proof}

    Fix a computable isomorphism $\psi$ from $A^\#$ to $A^\dagger$.
    Let $n \in \mb{N}$.
    For each $k = 1,\ldots,\ell$, we can compute a rational *-polynomial $t_k(\overline{x})$ such that $\norm{t_k(\overline{x}) - \psi(y_k)} < 2^{-(n+2)}$.
    Let $d$ be such that for any \cstar-algebra $B$ if $\norm{u_i - v_i}_B < 2^{-d}$ for $i = 1,\ldots,j$ then $\norm{t_k(\overline{u}) - t_k(\overline{v})}_B < 2^{-(n+2)}$ for all $k$.
    Let $s$ witness that $A^\dagger$ is computably weakly stable on input $d$.
    
    For $m \in \mb{N}$, let $W_m = \urep{w_1,\ldots,w_\ell}{\norm{q_i(\overline{w})} \leq 2^{-m}, \norm{w_k} \leq D_k + 2^{-m}}$.
    By \cite[Theorem 3.3]{fox}, given $m \in \mb{N}$ and a rational *-polynomial $r(\overline{w})$ we can effectively enumerate a decreasing sequence of rationals that converges to $\norm{r(\overline{w})}_{W_m}$.
    Enumerate over all $m \in \mb{N}$ and rational *-polynomials $r_1(\overline{w}),\ldots,r_j(\overline{w})$ and accept if $\norm{p_i(r_1(\overline{w}),\ldots,r_j(\overline{w}))}_{W_m} < 2^{-s}$ for all $i$ and $\norm{r_k(\overline{w})}_{W_m} < C_k + 2^{-s}$ for all $k$ and $\norm{t_k(r_1(\overline{w}),\ldots,r_j(\overline{w})) - w_k}_{W_m} < 2^{-(n+2)}$ for all $k$.
    Note an acceptance must happen since there is an embedding from $A$ into $\prod_\mc{U} W_m$ which sends $\psi(y_k)$ to $w_k$. 
    Hence, for any $\epsilon > 0$ there exist rational *-polynomials $r_1(\overline{w}),\ldots,r_j(\overline{w})$ in $\prod_\mc{U} W_m$ such that $\norm{p_i(r_1(\overline{w}),\ldots,r_j(\overline{w}))} < \epsilon$ for all $i$ and $\norm{r_k(\overline{w})} < C_k + \epsilon$ for all $k$ and $\norm{t_k(r_1(\overline{w}),\ldots,r_j(\overline{w}))- w_k} < \epsilon$ for all $k$.
    
    Let $B$ be a \cstar-algebra and $\overline{z} \in B$ such that $\norm{q_i(\overline{z})} \leq 2^{-m}$ for all $i$ and $\norm{z_k} \leq D_k + 2^{-m}$ for all $k$.
    Then $\norm{p_i(r_1(\overline{z}),\ldots,r_j(\overline{z}))} < 2^{-s}$ for all $i$ and $\norm{r_k(\overline{z})} < C_k + 2^{-s}$ for all $k$ and $\norm{t_k(r_1(\overline{z}),\ldots,r_j(\overline{z})) - z_k} < 2^{-(n+2)}$ for all $k$.
    By the choice of $s$, there exists $\varphi : A \to B$ such that $\norm{\varphi(x_k) - r_k(\overline{z})} < 2^{-d}$ for $k = 1,\ldots,j$.
    Then for $k = 1,\ldots,\ell$,
    \begin{align*}
        &\norm{\varphi(\psi(y_k)) - z_k}\\
        &\leq \norm{\varphi(\psi(y_k)) - \varphi(t_k(\overline{x}))}  \\
        &\qquad+ \norm{\varphi(t_k(\overline{x})) - t_k(r_1(\overline{z}),\ldots,r_j(\overline{z}))} \\
        &\qquad+ \norm{t_k(r_1(\overline{z}),\ldots,r_j(\overline{z})) - z_k}\\
        &\leq \norm{\psi(y_k) - t_k(\overline{x})}\\
        &\qquad+ \norm{t_k(\varphi(x_1),\ldots,\varphi(x_j)) - t_k(r_1(\overline{z},\ldots,r_j(\overline{z}))}\\
        &\qquad+ \norm{t_k(r_1(\overline{z}),\ldots,r_j(\overline{z})) - z_k}\\
        &< 2^{-n}.
    \end{align*}
\end{proof}

We next aim to show that all of the above examples of weakly stable \cstar-algebras are actually computably weakly stable.

The following result is folklore.
\begin{lem}\label{aunu}
    Suppose that $0 < \delta < \epsilon < 1$.  Then for any unital \cstar-algebra $A$ and $a \in A$, if $\norm{a^*a - 1} \leq \delta$ and $\norm{aa^* - 1} \leq \delta$, then there is a unitary $u \in A$ such that $\norm{a - u} < \epsilon$.
\end{lem}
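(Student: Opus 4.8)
The plan is to use the continuous functional calculus to polar-decompose $a$ in an approximate sense. Since $\norm{a^*a - 1} \leq \delta < 1$, the positive element $a^*a$ is invertible with spectrum contained in $[1-\delta, 1+\delta]$, so $|a| := (a^*a)^{1/2}$ is invertible with spectrum in $[\sqrt{1-\delta}, \sqrt{1+\delta}]$. I would then set $u := a|a|^{-1} = a(a^*a)^{-1/2}$ and check two things: first, that $u$ is a unitary, and second, that $\norm{a - u}$ is small. For the first point, $u^*u = |a|^{-1}a^*a|a|^{-1} = |a|^{-1}|a|^2|a|^{-1} = 1$ is immediate; for $uu^* = 1$ I would use the hypothesis $\norm{aa^* - 1} \leq \delta$, noting that $uu^* = a(a^*a)^{-1}a^*$, and argue that since $a^*a$ and $aa^*$ are both close to $1$, this element is forced to be $1$ — the cleanest route is probably to observe that $uu^*$ is a projection (being of the form $vv^*$ with $v^*v$ a projection... more precisely $u$ is a partial isometry with $u^*u = 1$, hence $uu^*$ is a projection), and that $\norm{uu^* - 1}$ can be bounded below $1$, forcing $uu^* = 1$.

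For the norm estimate, I would write $\norm{a - u} = \norm{u|a| - u} = \norm{u(|a| - 1)} = \norm{|a| - 1}$, using that $u$ is an isometry on the relevant side (or at least $\norm{u} \leq 1$ together with $u$ having the appropriate support). Since $|a| - 1 = (a^*a)^{1/2} - 1$ is given by the continuous function $t \mapsto \sqrt{t} - 1$ applied to $a^*a$, and this function is bounded by $\max(1 - \sqrt{1-\delta}, \sqrt{1+\delta} - 1)$ on $[1-\delta, 1+\delta]$, I get $\norm{|a| - 1} \leq \max(1 - \sqrt{1-\delta}, \sqrt{1+\delta} - 1)$. For $0 < \delta < 1$ this quantity is strictly less than $\delta$ (since $\sqrt{1+\delta} - 1 < \delta$ and $1 - \sqrt{1-\delta} < \delta$ both hold as $\sqrt{1 \pm \delta}$ lies strictly between $1$ and $1 \pm \delta$ by concavity), so in particular it is $< \epsilon$ whenever $\delta < \epsilon$; this is where the hypothesis $\delta < \epsilon$ enters, and in fact we get the stronger conclusion that $\delta$ alone controls the estimate.

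The main obstacle is the verification that $u$ is genuinely a two-sided unitary rather than merely an isometry — a single bound $\norm{a^*a - 1} \leq \delta$ on one side would only give a one-sided inverse, which is why the hypothesis includes the condition $\norm{aa^* - 1} \leq \delta$ as well. I expect the slickest way to close this gap is the projection argument: $p := 1 - uu^*$ is a projection, and I would bound $\norm{p} = \norm{1 - a(a^*a)^{-1}a^*}$ by relating it to $\norm{aa^* - 1}$ and $\norm{(a^*a)^{-1} - 1}$, obtaining $\norm{p} < 1$; since a projection of norm less than $1$ is zero, $uu^* = 1$. Everything else is routine functional-calculus bookkeeping.
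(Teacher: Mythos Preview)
The paper does not actually prove this lemma; it is stated there as folklore with no argument given, so there is nothing to compare against. Your polar-decomposition approach is the standard one and is correct: set $u = a(a^*a)^{-1/2}$, check $u^*u = 1$, and bound $\norm{a-u} = \norm{|a|-1}$ via functional calculus to get $\norm{a-u} \leq \max\bigl(1-\sqrt{1-\delta},\ \sqrt{1+\delta}-1\bigr) < \delta < \epsilon$.

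One small remark on the ``main obstacle'' you flag. Your projection argument for $uu^*=1$ works, but the naive triangle-inequality bound
\[
\norm{1 - a(a^*a)^{-1}a^*} \leq \norm{1 - aa^*} + \norm{a}^2\,\norm{(a^*a)^{-1} - 1} \leq \delta + \frac{(1+\delta)\delta}{1-\delta} = \frac{2\delta}{1-\delta}
\]
only falls below $1$ when $\delta < 1/3$, so as written it does not cover the full range $0<\delta<1$ in the hypothesis. The clean way to avoid any estimate here is to note that $\norm{a^*a - 1} < 1$ makes $a^*a$ invertible, giving $a$ a left inverse, while $\norm{aa^* - 1} < 1$ makes $aa^*$ invertible, giving $a$ a right inverse; hence $a$ is invertible in $A$, so $u = a|a|^{-1}$ is invertible, and an invertible isometry is automatically unitary. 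With that replacement your argument goes through for all $0<\delta<\epsilon<1$ without further fuss.
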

Non-effective versions of the following results can be found in \cite{BShape} or \cite{loring}.
\begin{lem}\label{apnp}
    Suppose that $0 < \epsilon < 1$ and $0 < \delta < \epsilon^2/8$.  Then for any \cstar-algebra $A$ and $a \in A$, if $\norm{a} \leq 2$, $\norm{a - a^*} \leq \delta$, and $\norm{a - a^2} \leq \delta$, then there is a projection $p \in A$ such that $\norm{a - p} < \epsilon$.
\end{lem}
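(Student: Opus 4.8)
The plan is to run the classical argument that an almost‑self‑adjoint almost‑idempotent element sits close to an honest projection, being careful with the constants so that $\|a-p\|$ comes out as an explicit $O(\delta)$ quantity.

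First I would symmetrize: put $b:=\tfrac12(a+a^*)$, so that $b=b^*$ and $\|b-a\|=\tfrac12\|a^*-a\|\le\delta/2$; the point of passing to $b$ is that continuous functional calculus is then available. The next step is to check that $b$ is almost idempotent. Expanding $b^2=\tfrac14(a^2+aa^*+a^*a+a^{*2})$ and grouping, one writes $b^2-b=\tfrac14\big((a^2-a)+(a^{*2}-a^*)+(aa^*-a)+(a^*a-a^*)\big)$ and bounds each summand using $\|a^2-a\|\le\delta$, its adjoint $\|a^{*2}-a^*\|\le\delta$, and the hypotheses $\|a\|\le 2$, $\|a-a^*\|\le\delta$ (for instance $aa^*-a=a(a^*-a)+(a^2-a)$, so $\|aa^*-a\|\le 2\delta+\delta$), which gives $\|b^2-b\|\le 2\delta$. (A slightly more careful grouping, writing $aa^*+a^*a-a-a^*=-(a-a^*)^2+(a^2-a)+(a^{*2}-a^*)$, even yields $\|b^2-b\|\le\delta+\delta^2/4$, but $2\delta$ is all that is needed.)

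Now set $\eta:=\|b^2-b\|$, so $\eta\le 2\delta<\epsilon^2/4<1/4$. Since $b$ is self-adjoint, the spectral radius of $b^2-b$ equals $\eta$, and the spectral mapping theorem gives $\sigma(b)\subseteq\{t\in\mathbb{R}:|t^2-t|\le\eta\}$. Writing $t^2-t=(t-\tfrac12)^2-\tfrac14$, the condition $\eta<1/4$ forces this set to be a disjoint union of a closed interval inside $(-\infty,\tfrac12)$ and one inside $(\tfrac12,\infty)$; in particular $\tfrac12\notin\sigma(b)$, so $\chi_{(1/2,\infty)}$ is continuous on $\sigma(b)$ and $p:=\chi_{(1/2,\infty)}(b)$ is a projection in $A$. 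Reading off $\|b-p\|=\sup_{t\in\sigma(b)}|t-\chi_{(1/2,\infty)}(t)|$ from the endpoints of the two intervals gives $\|b-p\|=\tfrac12-\sqrt{\tfrac14-\eta}=\eta/(\tfrac12+\sqrt{\tfrac14-\eta})<2\eta\le 4\delta$. Hence $\|a-p\|\le\|a-b\|+\|b-p\|<\delta/2+4\delta=9\delta/2<9\epsilon^2/16<\epsilon$, which is the desired conclusion.

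I do not anticipate a real obstacle; the only place demanding a little care is the bound on $\|b^2-b\|$. The crude triangle inequality ($\|b-a\|\le\delta/2$ together with $\|b^2-a^2\|\le 2\delta$ and $\|a^2-a\|\le\delta$) only gives $\|b^2-b\|\le 7\delta/2$, which is not small enough to guarantee $\eta<1/4$ when $\epsilon$ is close to $1$; the symmetric grouping above is what delivers the $O(\delta)$ bound with a small enough constant. Finally, nothing extra is needed for effectivity: $p$ is produced by an explicit continuous-functional-calculus formula applied to $b$, and every bound in the argument is explicit in $\delta$.
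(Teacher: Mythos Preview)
Your proof is correct and follows essentially the same approach as the paper: symmetrize to $b=(a+a^*)/2$, bound $\|b-b^2\|\le 2\delta<\epsilon^2/4$, deduce a spectral gap at $1/2$, and take $p$ to be the spectral projection of $b$ onto the right-hand interval. The only cosmetic difference is that the paper phrases the spectral containment as $\sigma(b)\subseteq[-\epsilon/2,\epsilon/2]\cup[1-\epsilon/2,1+\epsilon/2]$ and bounds $\|b-p\|\le\epsilon/2$, whereas you track the sharper estimate $\|b-p\|<2\eta\le 4\delta$; both lead to $\|a-p\|<\epsilon$.
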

\begin{proof}
    Let $x = (a + a^*)/2$, so $x$ is self-adjoint and $\norm{a - x} \leq \delta/2 < \epsilon/2$.
    Also, by expanding, we see $\norm{x - x^2} \leq \norm{a-a^2} + \norm{a}\norm{a - a^*}/2 \leq 2\delta < \epsilon^2/4$.
    Hence the spectrum of $x$, $\sigma(x) \subseteq [-\epsilon/2, \epsilon/2] \cup [1 - \epsilon/2, 1 + \epsilon/2]$.
    Let $f$ be continuous on $\sigma(x)$ such that $f = 0$ on $[-\epsilon/2, \epsilon/2]$ and $f = 1$ on $[1 - \epsilon/2, 1 + \epsilon/2]$.
    Let $p = f(x)$.
    Then $p$ is a projection and $\norm{a - p} \leq \norm{a - x} + \norm{x - f(x)} < \epsilon$.
\end{proof}

\begin{cor}\label{directsum}
    If $A_1^\dagger$ and $A_2^\dagger$ are computably weakly stable, then so is $A_1^\dagger \oplus A_2^\dagger$.
\end{cor}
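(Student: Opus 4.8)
The plan is to present $A_1\oplus A_2$ as a universal \cstar-algebra with one extra central projection separating the two summands, and then, given an approximate solution, to replace that projection by a genuine one via Lemma \ref{apnp}, compress into the two orthogonal corners it produces, and invoke the assumed computable weak stability of $A_1^\dagger$ and $A_2^\dagger$ inside those corners. Concretely, after renaming generators to make them disjoint, write $A_1^\dagger=\urep{\bar x}{p_i(\bar x)=0,\ \norm{x_k}\le C_k}$ and $A_2^\dagger=\urep{\bar y}{q_i(\bar y)=0,\ \norm{y_k}\le D_k}$, and take $A_1^\dagger\oplus A_2^\dagger$ to be the standard presentation of
\[\urep{\bar x,\bar y,e}{p_i(\bar x)=0,\ q_i(\bar y)=0,\ e=e^*=e^2,\ ex_k=x_ke,\ ey_k=y_ke,\ x_ke=x_k,\ y_ke=0,\ \norm{x_k}\le C_k,\ \norm{y_k}\le D_k,\ \norm{e}\le 1}.\]
Since the $C_k,D_k$ are dyadic rationals, this presentation is again finitely c.e.; and decomposing a representation along the central projection $e$ and applying the universal properties of $A_1$ and $A_2$ shows, in the usual way, that it is isomorphic to $A_1\oplus A_2$ via $x_k\mapsto(x_k,0)$, $y_k\mapsto(0,y_k)$, $e\mapsto(1,0)$. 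It therefore suffices to produce the required algorithm for this presentation.

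The algorithm is as follows. On input $n$, set $n_1=n_2=n+2$ and run the algorithms witnessing computable weak stability of $A_1^\dagger$ and $A_2^\dagger$ on these inputs to obtain $m_1,m_2$. Next compute a dyadic rational $\theta>0$, depending only on $n$, $m_1$, $m_2$, the $C_k,D_k$, and the (computable) Lipschitz constants $L_i,L_i'$ of the rational polynomials $p_i,q_i$ on a ball large enough to contain all points in play, chosen small enough that the estimates below close; in particular $\theta<2^{-n}$. Finally output an $m$ with $2^{-m}<\theta^2/8$ that is moreover large enough that the auxiliary estimates below hold. Then $m$ is a computable function of $n$, the choices being made in the order $m_1,m_2$, then $\theta$, then $m$.

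For correctness, let $B$ be a \cstar-algebra and let $\bar z,\bar w,g\in B$ satisfy the level-$m$ approximate relations of the displayed presentation. Since $\norm{g}\le 1+2^{-m}\le 2$, $\norm{g-g^*}\le 2^{-m}$, and $\norm{g-g^2}\le 2^{-m}<\theta^2/8$, Lemma \ref{apnp} produces a projection $f\in B$ with $\norm{g-f}<\theta$; put $f'=1-f$. Combining the approximate centrality relations $\norm{ex_k-x_ke}\le 2^{-m}$, $\norm{ey_k-y_ke}\le 2^{-m}$ and the approximate corner relations $\norm{x_ke-x_k}\le 2^{-m}$, $\norm{y_ke}\le 2^{-m}$ with $\norm{g-f}<\theta$, one gets $\norm{fz_kf-z_k}\le\eta$ and $\norm{f'w_kf'-w_k}\le\eta$, where $\eta$ depends linearly on $\theta$ (with coefficients built from the $C_k,D_k$) and on $2^{-m}$, hence is as small as needed. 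Since the $p_i$ have no constant term, evaluating them on $f\bar zf$ gives the same element inside $fBf$ as inside $B$, so $\norm{p_i(f\bar zf)}_{fBf}\le\norm{p_i(\bar z)}+L_i\eta\le 2^{-m}+L_i\eta\le 2^{-m_1}$ by the choice of $\theta$ and $m$, and likewise $\norm{fz_kf}_{fBf}\le C_k+2^{-m_1}$. Applying computable weak stability of $A_1^\dagger$ at level $m_1$ inside the unital \cstar-algebra $fBf$ yields a unital $*$-homomorphism $\varphi_1:A_1\to fBf$ with $\norm{\varphi_1(x_k)-fz_kf}<2^{-n_1}$; symmetrically, computable weak stability of $A_2^\dagger$ at level $m_2$ inside $f'Bf'$ yields $\varphi_2:A_2\to f'Bf'$ with $\norm{\varphi_2(y_k)-f'w_kf'}<2^{-n_2}$. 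Because $fBf$ and $f'Bf'$ are orthogonal corners of $B$ and $\varphi_1(1)+\varphi_2(1)=f+f'=1$, the map $\varphi:=\varphi_1\oplus\varphi_2:A_1\oplus A_2\to B$, $(a,b)\mapsto\varphi_1(a)+\varphi_2(b)$, is a unital $*$-homomorphism, with $\varphi(x_k)=\varphi_1(x_k)$, $\varphi(y_k)=\varphi_2(y_k)$, $\varphi(e)=f$. Hence $\norm{\varphi(x_k)-z_k}\le 2^{-n_1}+\eta<2^{-n}$, $\norm{\varphi(y_k)-w_k}\le 2^{-n_2}+\eta<2^{-n}$, and $\norm{\varphi(e)-g}=\norm{f-g}<\theta<2^{-n}$, exactly as computable weak stability of $A_1^\dagger\oplus A_2^\dagger$ demands.

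The one subtlety to flag is that $f$ need not be central — $g$ is merely approximately central — but this is harmless: for any projection $f$, $fBf$ and $(1-f)B(1-f)$ are orthogonal corners, and orthogonality of the ranges of $\varphi_1$ and $\varphi_2$ is all that is needed for $\varphi_1\oplus\varphi_2$ to be multiplicative, while approximate centrality of $g$ enters only in transferring the approximate corner relations through the replacement of $g$ by $f$. The remaining work is the routine bookkeeping of how $\eta$, $\theta$, and the polynomial Lipschitz constants depend on $m$ — the same flavor of estimate already carried out in Lemmas \ref{apnp} and \ref{robust} — so I expect no genuine obstacle beyond care in ordering the quantifiers.
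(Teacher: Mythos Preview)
Your approach is correct and is exactly what the paper has in mind: the corollary is stated without proof immediately after Lemma~\ref{apnp}, and your use of that lemma to replace the approximate projection by a genuine one, followed by compression into the two orthogonal corners and an appeal to the computable weak stability of $A_1^\dagger$ and $A_2^\dagger$ inside those corners, is precisely the intended derivation. Aside from a few notational slips (you write $e,x_k$ in a couple of places where the approximate data $g,z_k$ in $B$ are meant), there is nothing to correct.
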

\begin{lem}\label{apinpi}
    Suppose $0 < \epsilon < 1$ and $0 < \delta < 2^{-16}\epsilon^8$. Then for any \cstar-algebra $A$, $a \in A$, and projections $p_1,p_2 \in A$, if $\norm{a^*a - p_1} \leq \delta$ and $\norm{aa^* - p_2} \leq \delta$, then there is a partial isometry $v$ such that $\norm{a - v} < \epsilon$, $v^*v = p_1$, and $vv^* = p_2$.
\end{lem}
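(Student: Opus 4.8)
The plan is to run the classical polar-decomposition argument for approximate partial isometries between projections (as in \cite{BShape}, \cite{loring}) while keeping careful track of constants.

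First I would dispose of the degenerate case: if $p_1 = 0$ then $\norm{aa^*} = \norm{a^*a} \leq \delta < 1$ forces $p_2 = 0$, and $\norm{a} \leq \sqrt\delta < \epsilon$, so $v = 0$ works (symmetrically if $p_2 = 0$); so assume $p_1,p_2 \neq 0$, and note $\norm{a}^2 = \norm{a^*a} \leq 1+\delta \leq 2$. The first real step is to compress: set $b := p_2 a p_1$. From $\norm{(1-p_1)a^*a(1-p_1)} = \norm{(1-p_1)(a^*a - p_1)(1-p_1)} \leq \delta$ one gets $\norm{a - ap_1} \leq \sqrt\delta$, and symmetrically $\norm{a - p_2 a} \leq \sqrt\delta$, hence $\norm{a - b} \leq 2\sqrt\delta$. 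Expanding $b^*b - a^*a = b^*(b-a) + (b-a)^*a$ gives $\norm{b^*b - p_1} \leq (\norm{a}+\norm{b})\norm{a-b} + \delta \leq 9\sqrt\delta =: \eta$, and symmetrically $\norm{bb^* - p_2} \leq \eta$; note $\eta < 1/2$ since $\delta < 2^{-16}$. The point of the compression is that $b^*b = p_1(a^*p_2a)p_1 \in p_1 A p_1$ and $bb^* = p_2(ap_1a^*)p_2 \in p_2 A p_2$.

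The heart of the argument is that, inside the corner $p_1 A p_1$ (a unital \cstar-algebra with unit $p_1$), $b^*b$ is positive with $\norm{b^*b - p_1} \leq \eta < 1$, hence invertible there with $\sigma_{p_1 A p_1}(b^*b) \subseteq [1-\eta,1+\eta]$; likewise $bb^*$ in $p_2 A p_2$. I would set $v := b\,(b^*b)^{-1/2}$, the inverse square root computed in $p_1 A p_1$; equivalently $v = b f(b^*b)$ for $f$ continuous on $[0,\eta]\cup[1-\eta,1+\eta]$ with $f \equiv 0$ on $[0,\eta]$ and $f(t) = t^{-1/2}$ on $[1-\eta,1+\eta]$ (since $f(0)=0$, $f(b^*b)$ lies in the closed corner $p_1 A p_1$ and there equals $(b^*b)^{-1/2}$). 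Then $v^*v = (b^*b)^{-1/2}(b^*b)(b^*b)^{-1/2} = p_1$ exactly. Using the intertwining identity $b\,h(b^*b) = h(bb^*)\,b$ (valid for continuous $h$ with $h(0)=0$, via polynomial approximation on $\sigma(b^*b)\cup\sigma(bb^*)$), we get $v = f(bb^*)\,b$, so $vv^* = (bb^*)^{-1/2}(bb^*)(bb^*)^{-1/2} = p_2$ exactly; in particular $v$ is a partial isometry with $v^*v = p_1$ and $vv^* = p_2$.

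For the norm estimate, set $g(t) := t f(t)^2$, so $g(b^*b) = (b^*b)(b^*b)^{-1} = p_1$ (invertibility in $p_1 A p_1$), and $b - v = b\,(g-f)(b^*b)$. Then $\norm{b-v}^2 = \norm{(b^*b)\big((g-f)(b^*b)\big)^2} = \sup\{\,t(g(t)-f(t))^2 : t \in \sigma(b^*b)\,\}$, which vanishes on $[0,\eta]$ and on $[1-\eta,1+\eta]$ equals $(\sqrt t - 1)^2 \leq \eta^2$ (using $1-\sqrt{1-\eta}\leq\eta$). Hence $\norm{b-v}\leq\eta$ and $\norm{a-v} \leq \norm{a-b} + \norm{b-v} \leq 2\sqrt\delta + \eta \leq 11\sqrt\delta$; since $\delta < 2^{-16}\epsilon^8$ this is $< 11\cdot 2^{-8}\epsilon^4 < \epsilon$ (as $11/256 < 1$ and $\epsilon^4 < \epsilon$).

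The one genuinely delicate point — everything else being routine bookkeeping — is getting $v^*v = p_1$ and $vv^* = p_2$ \emph{on the nose} rather than merely approximately: a functional calculus applied to $a$ (or to $b$) directly in $A$ only produces a partial isometry whose source and range projections are \emph{close} to $p_1$ and $p_2$. The compression step is precisely what repairs this, by forcing $b^*b$ into the corner $p_1 A p_1$ where its spectrum is bounded away from $0$, so that the "polar part" $b(b^*b)^{-1/2}$ is genuinely defined and has exactly $p_1,p_2$ as its source and range projections. The wide gap between $\epsilon$ and $\delta$ in the hypothesis — one really only needs $\delta \lesssim \epsilon^2$ — leaves ample room for the crude constants above.
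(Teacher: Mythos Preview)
Your proof is correct and follows essentially the same strategy as the paper: compress to $b = p_2 a p_1$, apply the inverse--square-root functional calculus to $b^*b$ to obtain the polar partial isometry, and estimate $\|a-v\|$. The one organizational difference is in how you secure $v^*v = p_1$ and $vv^* = p_2$ exactly: the paper first bounds $\|(b^*b)^2 - b^*b\| \leq 49\sqrt\delta$ to obtain a spectral gap of order $\delta^{1/4}$, builds $w = bf(b^*b)$ whose source projection $w^*w$ is a subprojection of $p_1$, and then argues $\|p_1 - w^*w\| < 1$ forces equality; you instead observe that $b^*b \in p_1 A p_1$ is within $\eta = 9\sqrt\delta < 1$ of the unit $p_1$ of that corner, hence invertible there, so $v^*v = (b^*b)^{-1/2}(b^*b)(b^*b)^{-1/2} = p_1$ immediately. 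Your route is slightly more direct and yields sharper constants (only $\sqrt\delta$ appears, not $\delta^{1/4}$), but the underlying construction is the same.
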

\begin{proof}
    Let $b = p_2ap_1$.
    We have $\norm{a(1 - p_1)}^2 = \norm{(1 - p_1)(a^*a - p_1)(1 - p_1)} \leq \delta$, and similarly $\norm{(1 - p_2)a}^2 = \norm{(1 - p_2)aa^*(1 - p_2)} \leq \delta$.
    Hence $\norm{a - b} \leq \norm{a(1 - p_1)} + \norm{(1 - p_2)a} \leq 2\sqrt{\delta}$.
    Furthermore, \begin{align*}
            &\norm{(b^*b)^2 - b^*b} \\
            &\leq \norm{(b^*b)^2 - (a^*a)^2} + \norm{(a^*a)^2 - p_1^2} + \norm{p_1 - a^*a} + \norm{a^*a - b^*b} \\
            &\leq 20\norm{b - a} + 4\norm{a^*a - p_1} \\
            &\leq 49\sqrt{\delta}.
        \end{align*}
    Let $\gamma = 7\delta^{1/4}$.
    Then $\sigma(b^*b)$ is a subset of $[-\gamma,\gamma] \cup [1-\gamma,1+\gamma]$.
    Let $f$ be a continuous function on $\sigma(b^*b)$ such that $f = 0$ on $[-\gamma, \gamma]$ and $f(x) = \sqrt{x}$ for all $x \in [1-\gamma, 1+\gamma]$.
    Let $w = bf(b^*b)$.
    Then $w^*w = b^*bf(b^*b)^2$ and $ww^* = bb^*f(bb^*)^2$ are projections.
    Also, $\norm{b - w}^2 = \norm{(b^* - w^*)(b-w)} = \norm{b^*b - w^*w} \leq \gamma$, so $\norm{a - w} \leq \norm{a - b} + \norm{b - w} \leq 2\sqrt{\delta} + \sqrt{\gamma} < \epsilon$.
    Note $w^*w \leq p_1$ and $ww^* \leq p_2$.
    Then $p_1 - w^*w$ is a projection with norm 
    \[\norm{p_1 - w^*w} \leq \norm{p_1 - a^*a} + \norm{a^*a - b^*b} + \norm{b^*b - w^*w} < 1\] and $p_2 - ww^*$ is a projection with norm \[\norm{p_2 - ww^*} \leq \norm{p_2 - aa^*} + \norm{aa^* - bb^*} + \norm{bb^* - ww^*} < 1.\]
    Thus $w^*w = p_1$ and $ww^* = p_2$.

\end{proof}
\begin{cor}\label{macws}
    If $A^\dagger$ is computably weakly stable, then so is $M_n(A^\dagger)$.
\end{cor}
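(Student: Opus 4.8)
The plan is to work with the natural finitely c.e.\ presentation of $M_n(A)$ and reduce to computable weak stability of $A^\dagger$ after straightening an approximate system of matrix units. Concretely, present $M_n(A)$ on the generators $x_1,\dots,x_j$ of $A^\dagger$ together with $n^2$ new generators $e_{ij}$ ($1\le i,j\le n$), with relations the original $p_i(\overline x)=0$ and $\norm{x_k}\le C_k$, the matrix-unit relations $e_{ij}^*=e_{ji}$, $e_{ij}e_{kl}=\delta_{jk}e_{il}$, $\sum_i e_{ii}=1$, the norm bounds $\norm{e_{ij}}\le 1$, and the ``corner'' relations $x_k-e_{11}x_ke_{11}=0$. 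A standard argument (cf.\ \cite{loring}) shows the associated universal C*-algebra is $M_n(A)\cong A\otimes M_n(\mathbb C)$ with $x_k\mapsto x_k\otimes e_{11}$ and $e_{ij}\mapsto 1\otimes e_{ij}$: any representation sends the $e_{ij}$ to genuine matrix units, the corner relation forces the images of the $x_k$ into the $(1,1)$-corner, whence the original relations yield a $*$-homomorphism out of $A$ into that corner, and a matrix amplification glues these together. Since all relations are rational and finite in number, this is a finitely c.e.\ presentation, which we call $M_n(A^\dagger)$.

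Now fix a target precision $2^{-n}$; we must compute a tolerance $2^{-m}$. Suppose $B$ is a C*-algebra and $z_1,\dots,z_j,(f_{ij})\in B$ satisfy all of the above relations to within $2^{-m}$. \emph{Step one: straighten the matrix units.} Using Lemma~\ref{apnp} replace $f_{11}$ by a projection $q_1$ with $\norm{q_1-f_{11}}$ small; then, iteratively for $i=2,\dots,n$, apply Lemma~\ref{apnp} to the compression $(1-q_1-\cdots-q_{i-1})\,\tfrac{f_{ii}+f_{ii}^*}{2}\,(1-q_1-\cdots-q_{i-1})$ — which lies a controlled distance from $f_{ii}$ because $f_{ii}f_{ll}\approx 0$ and $q_l\approx f_{ll}$ — to get a projection $q_i$ in that hereditary subalgebra, so $q_1,\dots,q_n$ are mutually orthogonal with $\norm{q_i-f_{ii}}$ small. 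Then $\sum_i q_i$ is a projection within distance $<1$ of $1$, hence $\sum_i q_i=1$. Applying Lemma~\ref{apinpi} to each $f_{i1}$ (whose ``source'' $f_{i1}^*f_{i1}\approx q_1$ and ``range'' $f_{i1}f_{i1}^*\approx q_i$) produces partial isometries $v_i$ with $v_i^*v_i=q_1$, $v_iv_i^*=q_i$, $\norm{v_i-f_{i1}}$ small (take $v_1=q_1$). Setting $g_{ij}:=v_iv_j^*$, a direct computation shows $\{g_{ij}\}$ is an honest system of $n\times n$ matrix units and $\norm{g_{ij}-f_{ij}}$ is bounded by a computable function of $m$ together with the intermediate precisions we fed to Lemmas~\ref{apnp} and \ref{apinpi}, which are ours to choose.

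\emph{Step two: pass to the corner and apply weak stability.} Put $C:=g_{11}Bg_{11}$, a unital C*-algebra with unit $g_{11}$, and $z_k':=g_{11}z_kg_{11}\in C$. Since the corner relation $z_k\approx f_{11}z_kf_{11}$ holds to $2^{-m}$ and $g_{11}\approx f_{11}$, the distance $\norm{z_k'-z_k}$ is small; as each $p_i$ has no constant term and has a computable Lipschitz constant $L_i$ on the ball of radius $\max_k(C_k+1)$, we get $\norm{p_i(\overline{z'})}\le 2^{-m}+L_i\max_k\norm{z_k'-z_k}$ and $\norm{z_k'}\le C_k+2^{-m}$, all computed inside $C$. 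Let $M$ be the output of the computable-weak-stability algorithm for $A^\dagger$ on input $n+1$; choosing $m$ large enough that the estimates above force $\norm{p_i(\overline{z'})}\le 2^{-M}$ and $\norm{z_k'}\le C_k+2^{-M}$, we obtain a $*$-homomorphism $\varphi_0:A\to C$ with $\norm{\varphi_0(x_k)-z_k'}<2^{-(n+1)}$. Finally define $\Phi:M_n(A)=A\otimes M_n(\mathbb C)\to B$ by $a\otimes e_{ij}\mapsto g_{i1}\varphi_0(a)g_{1j}$; since the $g_{ij}$ are matrix units and $\varphi_0(A)\subseteq g_{11}Bg_{11}$, this is a unital $*$-homomorphism with $\Phi(x_k)=\varphi_0(x_k)$ and $\Phi(e_{ij})=g_{ij}$, so $\norm{\Phi(x_k)-z_k}\le\norm{\varphi_0(x_k)-z_k'}+\norm{z_k'-z_k}<2^{-n}$ and $\norm{\Phi(e_{ij})-f_{ij}}=\norm{g_{ij}-f_{ij}}<2^{-n}$ once $m$ is large enough. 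As all the estimates invoked are effective, the required $m$ is a computable function of $n$.

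The main obstacle is the first step: organizing the iterated use of Lemmas~\ref{apnp} and \ref{apinpi} so that the resulting $g_{ij}$ form an \emph{exact} system of matrix units — in particular getting $\sum_i q_i=1$ on the nose and $g_{ij}g_{kl}=\delta_{jk}g_{il}$ exactly — while keeping every perturbation bounded by an \emph{explicitly computable} function of $m$; coupled with the bookkeeping in step two needed to certify that the compressed data $\overline{z'}$ satisfies the relations of $A^\dagger$ within the exact tolerance $2^{-M}$ that the weak-stability algorithm for $A^\dagger$ demands. Everything else is routine perturbation theory and chaining of $\epsilon$--$\delta$ estimates. (One could instead phrase step one as an appeal to computable weak stability of $M_n(\mathbb C)$ together with an amalgamation argument, but carrying it out by hand with the lemmas already at our disposal is cleanest.)
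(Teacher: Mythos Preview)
Your proof is correct and follows exactly the approach the paper intends: the corollary is stated without proof in the paper, immediately after Lemmas~\ref{apnp} and \ref{apinpi}, and your argument is the standard one those lemmas are set up to support---straighten the approximate matrix units to an exact system $\{g_{ij}\}$ via the two lemmas, pass to the corner $g_{11}Bg_{11}$, invoke the computable weak stability of $A^\dagger$ there, and amplify. The bookkeeping you lay out (orthogonalizing the $q_i$ by successive compressions, forcing $\sum q_i=1$ as a projection near $1$, building $g_{ij}=v_iv_j^*$ from partial isometries with common source $q_1$) is the right way to make the matrix-unit perturbation effective, and the corner-and-amplify step is routine.
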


We now have that the following standard presentations are computably weakly stable:

\begin{examples}

\

\begin{enumerate}
    \item $\ugcsa{\mb{F}_n}$. 
    \item $\mb{C}$. More generally, the universal \cstar-algebra generated by $n$ projections.
    \item $M_n(\mb{C})$. More generally, any finite-dimensional \cstar-algebra.
    \item Cuntz algebras $\mc{O}_n$.
    \item The Toeplitz algebra $\mc{T}$.
\end{enumerate}
\end{examples}

For (1), apply Lemma \ref{aunu}; for (2), apply Lemma \ref{apnp}; for (3), apply Corollary \ref{directsum} and Lemma \ref{apinpi}; and for (4), apply Lemma \ref{apnp} and Lemma \ref{apinpi}.

Note that the previous presenations are all actually computable.  The following is the main result of this section:

\begin{thm}
    If there exists a locally universal \cstar-algebra $B$ with a computable presentation $B^\dagger$, then every computably weakly stable presentation $A^\dagger$ of a \cstar-algebra $A$ is computable.
\end{thm}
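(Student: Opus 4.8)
\emph{Proof plan.} The plan is to show that for every rational point $p$ of $A^\dagger$ the real number $\norm{p}_A$ is simultaneously upper and lower semicomputable, uniformly in $p$; this is (equivalently) what it means for $A^\dagger$ to be a computable presentation. The upper bounds come for free: since $A^\dagger$ is a finitely c.e.\ presentation, \cite[Theorem 3.3]{fox} produces, uniformly in $p$, a computable decreasing sequence of rationals with limit $\norm{p}_A$. So the whole task reduces to producing, uniformly in $p$, a computable nondecreasing sequence of rationals with limit $\norm{p}_A$; running the two sequences against one another then yields rational approximations to $\norm{p}_A$ to within any prescribed $2^{-k}$.

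For the lower bounds I would bring all three hypotheses on $B$ to bear. Write $A^\dagger=\urep{\overline{x}}{p_i(\overline{x})=0,\ \norm{x_k}\le C_k}$. Since $B$ is locally universal, fix an embedding $\iota\colon A\hookrightarrow B^{\mathcal U}$ and lift the generators to bounded sequences, $\iota(x_k)=(b_k^{(\nu)})_{\mathcal U}$ with $\norm{b_k^{(\nu)}}\le C_k$ for all $\nu$. As $\iota$ is an isometric $*$-homomorphism, $\lim_{\mathcal U}\norm{p_i(\overline{b}^{(\nu)})}=0$ for each $i$, $\lim_{\mathcal U}\norm{b_k^{(\nu)}}\le C_k$ for each $k$, and $\lim_{\mathcal U}\norm{p(\overline{b}^{(\nu)})}=\norm{p}_A$. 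Hence for every $\delta,\eta>0$ there is a single index $\nu$ with $\norm{p_i(\overline{b}^{(\nu)})}<\delta$ for all $i$, $\norm{b_k^{(\nu)}}<C_k+\delta$ for all $k$, and $\norm{p(\overline{b}^{(\nu)})}>\norm{p}_A-\eta$. Since the rational points of $B^\dagger$ are dense in $B$ and $*$-polynomials and the norm are continuous (uniformly so on bounded sets), we may perturb $\overline{b}^{(\nu)}$ to a tuple $\overline{r}$ of rational points of $B^\dagger$, of the same length as $\overline{x}$, still satisfying these three strict inequalities. In summary: \emph{for every $\delta,\eta>0$ there is a tuple $\overline{r}$ of rational points of $B^\dagger$ with $\norm{p_i(\overline{r})}_B<\delta$ for all $i$, $\norm{r_k}_B<C_k+\delta$ for all $k$, and $\norm{p(\overline{r})}_B>\norm{p}_A-\eta$.}

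Now the search. Put $M=1+\max_k C_k$ and fix a dyadic constant $K$, computable from $p$ and $M$, such that $\norm{p(\overline{u})-p(\overline{v})}\le K\cdot 2^{-n}$ whenever $\overline{u},\overline{v}$ lie in the $M$-ball of some \cstar-algebra and $\norm{u_k-v_k}\le 2^{-n}$ for all $k$. Given $t\in\mathbb N$, pick (computably) $n_t$ with $K\cdot 2^{-n_t}<2^{-t}$ and let $m_t$ be the value returned by the computable-weak-stability algorithm on input $n_t$. Dovetailing over all $t\in\mathbb N$ and all tuples $\overline{r}$ of rational points of $B^\dagger$ of the length of $\overline{x}$, use the computability of $B^\dagger$ to semidecide the open conditions $\norm{p_i(\overline{r})}_B<2^{-m_t}$ (all $i$) and $\norm{r_k}_B<C_k+2^{-m_t}$ (all $k$); note $p_i(\overline{r})$ and $r_k$ are themselves rational points of $B^\dagger$. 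Whenever a pair $(t,\overline{r})$ passes these tests, the definition of computable weak stability supplies a $*$-homomorphism $\varphi\colon A\to B$ with $\norm{\varphi(x_k)-r_k}_B<2^{-n_t}$ for all $k$; since $\varphi$ is contractive we have $\norm{\varphi(x_k)}_B\le\norm{x_k}_A\le C_k\le M$, while $\norm{r_k}_B<C_k+2^{-m_t}\le M$, so the choice of $K$ gives $\norm{\varphi(p)-p(\overline{r})}_B<2^{-t}$ and hence
\[
\norm{p}_A\ \ge\ \norm{\varphi(p)}_B\ \ge\ \norm{p(\overline{r})}_B-2^{-t}.
\]
Computing a rational $\rho$ with $|\rho-\norm{p(\overline{r})}_B|<2^{-t}$, we record the rational number $\rho-2^{-t+1}$, which by the displayed chain does not exceed $\norm{p}_A$; the desired nondecreasing sequence is the running maximum of all numbers recorded this way.

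It remains to check that this running maximum converges to $\norm{p}_A$, and here the emphasized conclusion of the second paragraph re-enters: given $\eta>0$, choose $t$ with $3\cdot 2^{-t}<\eta$ and apply it with $\delta=2^{-m_t}$ to obtain a tuple $\overline{r}$ of rational points of $B^\dagger$ that passes the $t$-th batch of tests and additionally satisfies $\norm{p(\overline{r})}_B>\norm{p}_A-\eta$; such an $\overline{r}$ is eventually found in the dovetailing, and the number it causes us to record exceeds $\norm{p}_A-2\eta$. As $\eta>0$ was arbitrary, the running maximum tends to $\norm{p}_A$. The crux --- and the only real obstacle --- is seeing that the three hypotheses on $B$ interlock exactly as needed: local universality forces near-representations of $A$ inside $B$ to attain norms arbitrarily close to the true norm, computable weak stability upgrades each near-representation to a genuine homomorphism $A\to B$ and thus to a \emph{certified} rational lower bound for $\norm{p}_A$, and the computable presentation of $B$ makes the relevant norms and the entire search effective. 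The remaining points --- a Lipschitz estimate for $p$ on an explicit ball, and an organization of the dovetailing that keeps everything uniform in $p$ --- are routine.
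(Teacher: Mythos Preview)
Your proposal is correct and follows essentially the same route as the paper: upper semicomputability from \cite[Theorem~3.3]{fox}, then a dovetailed search over rational tuples in $B$ that approximately satisfy the relations, using computable weak stability to certify lower bounds via a genuine $*$-homomorphism $A\to B$, and local universality of $B$ to guarantee the lower bounds climb to $\norm{p}_A$. If anything, you are more careful than the paper about one point: the Lipschitz estimate for $p$ only holds on a fixed ball, and you explicitly verify that both $\varphi(x_k)$ and $r_k$ lie in the $M$-ball before invoking it, whereas the paper's phrase ``in all \cstar-algebras'' elides this.
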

\begin{proof}
    Since $A^\dagger$ is c.e., by \cite[Theorem 3.3]{fox} there is an effective procedure which, when given a rational point $q$ of $A^\dagger$, enumerates a decreasing sequence of rationals that converges to $\norm{q}$.
    It is enough to show we can do the same from below.
    
    Given a rational point $q(\overline{x})$ of $A^\dagger$, we proceed as follows.
    Enumerate through all $j \in \mathbb{N}$.
    For each $j$, determine $n \in \mathbb{N}$ such that if $\norm{x_k - y_k} < 2^{-n}$ for all $k$ then $\norm{q(\overline{x}) - q(\overline{y})} < 2^{-j}$ in all \cstar-algebras.
    Let $m \in \mathbb{N}$ be given, upon input $n$, by the effective procedure which witnesses that $A^\dagger$ is computably weakly stable.
    Enumerate all tuples $\overline{r}$ of rational points of $B^\dagger$ such that $\norm{p_i(\overline{r})} < 2^{-m}$ for all $i$ and $\norm{r_k} < C_k + 2^{-m}$ for all $k$.
    Enumerate over all dyadic rationals $d$.
    If ever $\norm{q(\overline{r})} > 2^{-j} + d$ where $d$ is greater than all previous outputs, then output $d$.
    By the choice of $m$, there exists a *-homorphism $\varphi : A \to B$ such that $\norm{\varphi(x_k) - r_k} < 2^{-n}$.
    Then $\norm{q(\overline{x})} \geq \norm{\varphi(q(\overline{x}))} \geq \norm{q(\overline{r})} - 2^{-j} > d$.

    We show this sequence does converge to $\norm{q}$ from below.
    Indeed, let $d < \norm{q(\overline{x})}$ and let $j \in \mathbb{N}$ be such that $\norm{q(\overline{x})} - d > 2^{-j}$.
    Let $n$ and $m$ as above.
    Since $B$ is locally universal, for small $\epsilon > 0$ there exists $\overline{z}$ in $B$ such that $\norm{p_i(\overline{z})} < 2^{-m} - \epsilon$ for all $i$ and $\norm{z_k} < C_k + 2^{-m} - \epsilon$ for all $k$ and $\norm{q(\overline{z})} > d + 2^{-j} + \epsilon$.
    So, there exist rational points $\overline{r}$ of $B^\dagger$ such that $\norm{p_i(\overline{r})} < 2^{-m}$ for all $i$ and $\norm{r_k} < C_k + 2^{-m}$ for all $k$ and $\norm{q(\overline{r})} > d + 2^{-j}$.
\end{proof}

By Fact \ref{alec}, we immediately obtain the following:

\begin{cor}
    If there exists a simple locally universal \cstar-algebra $B$ with a c.e. presentation $B^\dagger$ (in particular, if KEP holds), then every computably weakly stable presentation $A^\dagger$ of a \cstar-algebra $A$ is computable.
\end{cor}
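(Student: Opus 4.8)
The plan is to derive this corollary directly from the preceding theorem together with Fact~\ref{alec}. The first step is to observe that if $B$ is a \emph{simple} \cstar-algebra carrying a c.e.\ presentation $B^\dagger$, then Fact~\ref{alec} applies directly and tells us that $B^\dagger$ is in fact a \emph{computable} presentation of $B$. Hence a simple locally universal \cstar-algebra with a c.e.\ presentation is automatically a locally universal \cstar-algebra with a computable presentation, so the hypothesis of the preceding theorem is satisfied. Applying that theorem then yields the conclusion: every computably weakly stable presentation $A^\dagger$ of a \cstar-algebra $A$ is computable.

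The second step is to verify the parenthetical assertion that a positive solution to KEP supplies a $B$ of the required sort; here I would take $B=\cal O_2$. By definition, a positive solution to KEP says that every \cstar-algebra embeds into $\cal O_2^\u$, which is exactly the statement that $\cal O_2$ is locally universal. Moreover $\cal O_2$ is simple (every Cuntz algebra $\cal O_n$ with $n\geq 2$ is), and its standard presentation --- item~(4) among the examples above --- is plainly finitely c.e.\ and in particular c.e. Thus $\cal O_2$ meets all the hypotheses of the corollary, and the first step applies.

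I do not anticipate a genuine obstacle here: the substantive work has already been carried out in Fact~\ref{alec} (due to the first-named author \cite{fox}) and in the preceding theorem, both of which are available to invoke. The only points requiring (routine) verification are the simplicity of $\cal O_2$ and the fact that its standard presentation is c.e., and both are standard.
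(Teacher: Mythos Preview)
Your proposal is correct and matches the paper's approach exactly: the paper simply invokes Fact~\ref{alec} to upgrade the c.e.\ presentation of the simple algebra $B$ to a computable one, and then the preceding theorem applies. The parenthetical about KEP is handled in the paper just as you do, via the already-noted fact that $\cal O_2$ is simple with a finitely c.e.\ standard presentation.
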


\section{A quantum complexity result}

A \textbf{nonlocal game with $n$ questions and $k$ answers} is a pair $\mathfrak G=(\pi,D)$, where $\pi$ is a probability distribution on $[n]$ and $D:[n]\times[n]\times[k]\times[k]\to \{0,1\}$ is called the \textbf{decision predicate for the game}.  Here, $[n]:=\{1,\ldots,n\}$ and analogously for $[k]$.  We also refer to the pair $(n,k)$ as the \textbf{dimensions} of $\mathfrak G$.  We view two players, henceforth referred to as Alice and Bob, playing $\mathfrak G$ as follows:  a pair of questions $(x,y)\in [n]\times [n]$ is randomly chosen according to $\pi$ and then Alice and Bob somehow respond with a pair of answers $(a,b)\in [k]\times [k]$; they win the game if $D(x,y,a,b)=1$ and otherwise they lose the game.

In order to describe their strategies for playing $\mathfrak G$, we need the notion of POVMs.  Recall that a \textbf{positive operator-valued measure} or \textbf{POVM} on a Hilbert space $\cal H$ is a finite collection $A_1,\ldots,A_k$ of positive operators on $\cal H$ such that $A_1+\cdots+A_k=I$.  We refer to $k$ as the \textbf{length} of the POVM.  More generally, one can use the same definition to define a POVM in any \cstar-algebra.

For each $k$, let $\varphi_k(X)$ denote the formula $$\max\left(\max_{1\leq i\leq k}\inf_{Z_i} \|Z_i^*Z_i-X_i\|,\|\sum_{i=1}^kX_i-I\|\right)$$ in the $k$ variables $X=(X_1,\ldots,X_k)$.  The following lemma is easy but will be used throughout the paper:

\begin{lem}\label{definable}
For each $\epsilon>0$ and $k\geq 1$, there is $\delta>0$ such that:  for any \cstar-algebra $C$ and any elements $A_1,\ldots,A_k$ from the unit ball of $C$, if $\varphi_k(A_1,\ldots,A_k)<\delta$, then there is a POVM $B_1,\ldots,B_k$ in $C$ such that $\max_{1\leq i\leq k}\|A_i-B_i\|<\epsilon$.
\end{lem}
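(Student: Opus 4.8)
The plan is to prove the statement essentially by a compactness / saturation argument, deducing the uniform $\delta$ from the fact that the corresponding first-order condition cannot fail in any C$^*$-algebra. Concretely, I would argue by contradiction: suppose the conclusion fails for some $\epsilon > 0$ and $k \geq 1$. Then for each $n \in \bb N$ there is a C$^*$-algebra $C_n$ and elements $A_1^{(n)},\ldots,A_k^{(n)}$ in the unit ball of $C_n$ with $\varphi_k(A_1^{(n)},\ldots,A_k^{(n)}) < 2^{-n}$, yet every POVM $B_1,\ldots,B_k$ in $C_n$ has $\max_i \|A_i^{(n)} - B_i\| \geq \epsilon$.

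Next I would pass to the ultraproduct $C := \prod_{\u} C_n$ and set $A_i := (A_i^{(n)})_\u$ for $i=1,\ldots,k$. Since $\varphi_k(A_1^{(n)},\ldots,A_k^{(n)}) \to 0$ along $\u$, and since each of the defining pieces of $\varphi_k$ (the norms $\|Z_i^*Z_i - X_i\|$ after taking the infimum, and $\|\sum_i X_i - I\|$) is a continuous, hence ultraproduct-stable, quantity, we get $\varphi_k(A_1,\ldots,A_k) = 0$ in $C$. This forces $\sum_{i=1}^k A_i = I$ and, for each $i$, the existence of $Z_i \in C$ with $A_i = Z_i^*Z_i$, i.e.\ each $A_i$ is positive; thus $(A_1,\ldots,A_k)$ is an honest POVM in $C$. (One can even take $A_i$ itself in place of $Z_i^*Z_i$: positivity already gives $A_i = (A_i^{1/2})^*A_i^{1/2}$.)

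The remaining step is to push this POVM back down to the coordinates $C_n$ to contradict the choice of the $A_i^{(n)}$. Writing $A_i = A_i^{1/2} A_i^{1/2}$ with $A_i^{1/2}$ represented by a bounded sequence $(b_i^{(n)})_\u$, the products $(b_i^{(n)*} b_i^{(n)})_\u$ equal $A_i$, so $\|A_i^{(n)} - b_i^{(n)*} b_i^{(n)}\| \to 0$ along $\u$, and $\|\sum_i b_i^{(n)*}b_i^{(n)} - I\| \to 0$ along $\u$. Now apply Lemma~\ref{apnp}-style perturbation reasoning, or more simply the standard fact that a tuple of positive operators summing to something $\delta$-close to $I$ can be rescaled (e.g.\ by conjugating by $(\sum_i c_i)^{-1/2}$, which is well-defined once $\sum_i c_i$ is close enough to $I$) to an exact POVM at cost $O(\delta)$ in norm; this yields, for $\u$-many $n$, a genuine POVM $B_1^{(n)},\ldots,B_k^{(n)}$ in $C_n$ with $\max_i \|A_i^{(n)} - B_i^{(n)}\| < \epsilon$, contradicting the choice of $C_n$.

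The only mildly delicate point—the "main obstacle," such as it is—is verifying that $\varphi_k$ behaves well under ultraproducts despite containing an infimum over the auxiliary variables $Z_i$: one needs that $\inf_{Z_i}\|Z_i^*Z_i - X_i\|$ computed in $\prod_\u C_n$ agrees with the ultralimit of the same quantity computed in each $C_n$. This is exactly the standard fact that existential (sup-inf) formulas are well-behaved in continuous-logic ultraproducts (Łoś's theorem), and for the $\leq$ direction—which is all we need, namely that the ultraproduct value is $0$—it is elementary: choosing near-optimal $Z_i^{(n)}$ in each coordinate produces an element of the ultraproduct witnessing the infimum is $0$. With that in hand the argument is routine.
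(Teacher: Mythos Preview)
Your argument is correct, but note that the paper gives no proof of this lemma at all: it is stated as ``easy'' and left to the reader. The intended argument is almost certainly the direct constructive one, and in fact your ultraproduct detour is doing no real work, because the ``standard fact'' you invoke in the final step already yields a direct proof of the lemma. Concretely: from $\varphi_k(A_1,\ldots,A_k)<\delta$ pick $Z_i$ with $\|Z_i^*Z_i-A_i\|<\delta$ and set $P_i:=Z_i^*Z_i\geq 0$; then $\|\sum_i P_i-I\|<(k+1)\delta$, so for $(k+1)\delta<1$ the element $S:=\sum_iP_i$ is invertible and $B_i:=S^{-1/2}P_iS^{-1/2}$ is a POVM with $\|B_i-P_i\|$ controlled by $\|S^{-1/2}-I\|$, hence $\|B_i-A_i\|<\epsilon$ for $\delta$ small enough. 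This is exactly the rescaling you appeal to at the end, applied one step earlier; no ultraproduct, no contradiction, and an explicit (in fact computable) $\delta(\epsilon,k)$ drops out.

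Your compactness approach is not wrong, and in other contexts (where no explicit perturbation is available) it is the right move. Here, though, it is circuitous: you pass to the ultraproduct to conclude the $A_i$ form a POVM there, then to push that back to the coordinates you still need precisely the positive-rescaling perturbation lemma---which, had you applied it directly to $P_i=Z_i^*Z_i$ in the original algebra, would have finished the proof in one line. The only thing the ultraproduct buys you is avoiding the (trivial) bookkeeping of how $\delta$ depends on $\epsilon$ and $k$.
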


We use POVMs to define strategies for nonlocal games.  First suppose that $\mathfrak G$ is a nonlocal game with dimensions $(n,k)$ and $C$ is a \cstar-algebra.  A \textbf{$\mathfrak G$-measurement in $C$} is a tuple $A:=(A^x)_{x\in [n]}$ of POVMs in $C$, each of which has length $k$.  Of course, the notion of a $\mathfrak G$-measurement in $C$ only depends on the dimensions of the nonlocal game, but the terminology will prove useful in the sequel.  Thus, corresponding to each possible question and answer pair $(x,a)\in [n]\times [k]$, we will have a positive element $A^x_a\in C$, and for each $x\in [n]$, we have $\sum_{a\in [k]}A^x_a=I$.

A \textbf{$\mathfrak G$-strategy in $C$} is a tuple $\sigma:=(A,B,\phi)$, where $A$ and $B$ are $\mathfrak G$-measurements in $C$ and $\phi\in S(C)$ is a state on $C$.  Given a $\mathfrak G$-strategy $\sigma$ in $C$ as above, we define the corresponding correlation matrix $p_\sigma\in [0,1]^{n^2k^2}$ by $p_{\sigma}(a,b|x,y)=\phi(A^x_a\bullet B^y_b)$, where, for any $A,B\in C$, we define $A\bullet B:=\frac{1}{2}(A^{1/2}BA^{1/2}+B^{1/2}AB^{1/2})$.\footnote{This notation seems to have first been considered by Ozawa in \cite{ozawa}.}  The intuition behind this definition is that if Alice and Bob play $\mathfrak G$ according to the strategy $\sigma$, then upon receiving the question pair $(x,y)$, they both measure their portion of the state $\phi$ using their POVMs $A^x$ and $B^y$; since we are not assuming that these measurements commute, we take the average of the results obtained from when Alice measures first and from when Bob measures first.  Consequently, $p_\sigma(a,b|x,y)$ is the probability that they answer the question pair $(x,y)$ with the answer pair $(a,b)$ when using the strategy $\sigma$.  

Of course, if each $A^x_a$ and $B^y_b$ commute, the above definition degenerates to the usual situation of calculating $p_\sigma(a,b|x,y)=\phi(A^x_aB^y_b)$ and we call the strategy $\sigma$ \textbf{commuting}.  Fix a positive real number $\delta>0$.  We call the strategy $\sigma$ \textbf{$\delta$-op-almost commuting} if $\sum_{a,b\in [k]}\|[A^x_a,B^y_b]\|<\delta$ for all $(x,y)\in [n]\times [n]$.  Note that the notion of a $\delta$-op-commuting strategy depends only on the pair of $\mathfrak G$-measurements, whence it makes sense to say that a pair of such measurements is a \textbf{$\delta$-op-almost commuting pair}.

Given a $\mathfrak G$-strategy $\sigma$ in $C$, we define the \textbf{value of $\mathfrak G$ when playing according to $\sigma$} to be the expected value Alice and Bob have of winning the game when playing according to $\sigma$, that is,
$$\val(\mathfrak G,\sigma):=\sum_{x,y}\pi(x,y)\sum_{a,b}D(x,y,a,b)p_\sigma(a,b|x,y).$$ In the sequel, it will behoove us to define, for every pair $(A,B)$ of $\mathfrak G$-measurements in $C$, the element $$\mathfrak G(A,B):=\sum_{x,y}\pi(x,y)\sum_{a,b}D(x,y,a,b)(A^x_a\bullet B^y_b).$$  With this notation, we have $\val(\mathfrak G,\sigma)=\phi(\mathfrak G(A,B))$.

If one considers the supremum of $\val(\mathfrak G,\sigma)$ as $\sigma$ ranges over all commuting $\mathfrak G$-strategies in $\mathcal B(\cal H)$, one obtains the \textbf{commuting value of $\mathfrak G$}, denoted $\val^{co}(\mathfrak G)$.  
Similarly, given $\delta>0$, we define the \textbf{$\delta,\op$-commuting value of $\mathfrak G$}, denoted $\val^{co}_{\delta,\op}(\mathfrak G)$, to be the supremum of $\val(\mathfrak G,\sigma)$ as $\sigma$ ranges over all $\delta$-op-almost commuting $\mathfrak G$-strategies in $\cal B(\cal H)$.  



Recall that a \textbf{language} (in the sense of complexity theory) is simply a subset of $2^{<\bb N}$, that is, is a set of finite sequences of bits.
\begin{defn}
We say that a language $L$ belongs to $\mip^{co}$ if there is an efficient mapping from sequences of bits $z$ to nonlocal games $\mathfrak G_z$ such that:
\begin{itemize}
    \item If $z\in L$, then $\val^{co}(\mathfrak G_z)=1$.
    \item If $z\notin L$, then $\val^{co}(\mathfrak G_z)\leq \frac{1}{2}$.
\end{itemize}
\end{defn}

In \cite{NPA08}, it was shown that if $L$ belongs to $\mip^{co}$, then $L$ belongs to the complexity class coRE of sets whose complement is c.e. and it was asked if this inclusion is in fact an equality.  The main result of this section shows that the analogous question has a negative answer for a suitably defined almost-commuting version of the class $\mip^{co}$:

\begin{defn}
Fix a computable function $\delta:\bb N\to [0,1]$.  We say that a language $L$ belongs to $\mip^{co}_{\delta,\op}$ if there is an efficient mapping from sequences of bits $z$ to nonlocal games $\mathfrak G_z$ such that:
\begin{itemize}
    \item If $z\in L$, then $\val^{co}_{\delta(|z|),\op}(\mathfrak G_z)=1$.
    \item If $z\notin L$, then $\val^{co}_{\delta(|z|),\op}(\mathfrak G_z)\leq \frac{1}{2}$.
\end{itemize}
\end{defn}

\begin{thm}\label{KEP4}
If there is a locally universal \cstar-algebra $C$ that has a computable presentation (in particular, if KEP holds), then for every computable function $\delta:\bb N\to [0,1]$, every language in $\mip^{co}_{\delta,\op}$ is recursively enumerable.
\end{thm}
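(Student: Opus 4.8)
The plan is to show that, under the stated hypothesis, one can semi-decide membership in $L$: for each input $z$, we run a search that halts exactly when $z\in L$. Since $\mip^{co}_{\delta,\op}$ always sits inside $\mathrm{coRE}$ (the "no" case is witnessed by a computable search certifying $\val^{co}_{\delta(|z|),\op}(\mathfrak G_z)\le\frac12$, analogously to \cite{NPA08}), having both a semi-decision procedure for $L$ and for its complement yields decidability, so in particular $L$ is recursively enumerable as claimed.

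\textbf{Setting up the search.} Fix the efficient map $z\mapsto\mathfrak G_z$ and write $(n,k)$ for the dimensions of $\mathfrak G_z$ and $\delta_0:=\delta(|z|)$, both computable from $z$. The key observation is that $\val^{co}_{\delta_0,\op}(\mathfrak G_z)$ is an infimum-of-suprema quantity that, because $C$ is locally universal, can be computed inside $C$ itself rather than over all $\mathcal B(\mathcal H)$: a $\delta_0$-op-almost commuting $\mathfrak G_z$-strategy $\sigma=(A,B,\phi)$ in $\mathcal B(\mathcal H)$ with $\val(\mathfrak G_z,\sigma)$ close to $1$ gives, via the embedding of $C^*(\text{strategy operators})$ into $C^\u$ and a standard reindexing/approximation argument, elements of $C$ realizing nearly the same value and nearly the same almost-commutation bound; conversely any strategy in $C$ gives one in some $\mathcal B(\mathcal H)$ via the GNS representation. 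So it suffices to search over \emph{rational points} of the computable presentation $C^\dagger$. Concretely: enumerate all tuples $\bar r=((r^x_a)_{x\in[n],a\in[k]},(s^y_b)_{y\in[n],b\in[k]})$ of rational points of $C^\dagger$ together with candidate rational states (finite rational combinations of vector states in the standard representation of a universal \cstar-algebra, or more simply positive rational functionals witnessed through the completely positive structure), and for each, use the computable presentation to approximate, to any desired precision $2^{-t}$:
\begin{itemize}
\item $\varphi_k$ applied to each row, to certify via Lemma \ref{definable} that the rows are $\eta$-close to genuine POVMs;
\item $\sum_{a,b}\|[r^x_a,s^y_b]\|$ for each $(x,y)$, to certify it is $<\delta_0$;
\item the number $\phi(\mathfrak G_z(A,B))$, to certify it exceeds $1-\eta$.
\end{itemize}
When all these certificates succeed for a sufficiently small tolerance (chosen so that Lemma \ref{definable} upgrades the approximate POVMs to honest ones and so that the resulting honest strategy is still genuinely $\delta_0$-op-almost commuting and has value $>\frac12$ — impossible if $z\notin L$), we halt and accept.

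\textbf{Why the search halts when $z\in L$.} If $z\in L$ then $\val^{co}_{\delta_0,\op}(\mathfrak G_z)=1$, so there is a $\delta_0$-op-almost commuting strategy in some $\mathcal B(\mathcal H)$ with value arbitrarily close to $1$; moving it into $C^\u$ by local universality and then extracting a representative sequence in $C$, we get (for each $\epsilon>0$) operators in $C$ that are $\epsilon$-close to a POVM strategy which is still $\delta_0$-op-almost commuting — here one must be slightly careful to reserve a little slack, exactly as in the proof of Theorem~\ref{robust} and the main theorem of Section~2: pick the witnessing $\mathcal B(\mathcal H)$-strategy to be $(\delta_0-\epsilon')$-op-almost commuting and of value $>1-\epsilon'$, so that the $\epsilon$-perturbation incurred in passing to rational points of $C^\dagger$ still lands strictly below $\delta_0$ and strictly above $\frac12$. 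Density of rational points then guarantees that our enumeration eventually meets such a tuple with the right certified inequalities, and the search halts.

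\textbf{Main obstacle.} The delicate point is the handling of the state $\phi$ and the value functional together with the op-almost-commutation in the passage between $\mathcal B(\mathcal H)$, $C^\u$, and $C$. Correlations are continuous in the operator norms of the strategy data (the $\bullet$-product and $\mathfrak G_z(A,B)$ are polynomial, and states are contractive), so the approximations are quantitatively controlled, but one has to verify (i) that a state on $C^\u$ pulls back to an "approximately multiplicative" sequence of states on $C$ suitable for the correlation estimate — this is where one uses that only finitely many operators and a single polynomial $\mathfrak G_z(A,B)$ are involved, so Kaplansky-type / ultrapower-of-states arguments apply — and (ii) that the strict inequality $\delta_0-\epsilon'<\delta_0$ survives, which is why the definition of $\mip^{co}_{\delta,\op}$ was phrased with the value-$1$ (rather than value-$(1-\epsilon)$) completeness condition. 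Everything else is a routine $\epsilon/\delta$ bookkeeping exercise combined with the computable-presentation hypothesis, exactly mirroring the structure of the Section~2 main theorem.
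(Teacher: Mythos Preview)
Your overall architecture---search over rational points of a computable presentation of a locally universal $C$, certify almost-commutation and value, and appeal to local universality plus Lemma~\ref{definable} for completeness---matches the paper's. But the place you flag as the ``main obstacle'' is a genuine gap that you do not close, and the paper's proof shows it was avoidable all along.

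The issue is your treatment of the state. You propose to enumerate ``candidate rational states'' and approximate $\phi(\mathfrak G_z(A,B))$, then worry about pulling a state on $C^\u$ back to a sequence of states on $C$. Neither step is justified: a computable presentation $C^\dagger$ lets you approximate \emph{norms} of rational points, but you have not explained how to produce a computably enumerable norm-dense family of states on $C$, nor why ``vector states in the standard representation'' is a well-defined computable object here. Likewise, states on an ultrapower need not come from sequences of states on $C$ in any usable way. The paper sidesteps all of this with one observation: $\mathfrak G(A,B)=\sum_{x,y}\pi(x,y)\sum_{a,b}D(x,y,a,b)(A^x_a\bullet B^y_b)$ is a \emph{positive} element of $C$ (each $A^x_a\bullet B^y_b$ is positive), so $\sup_{\phi\in S(C)}\phi(\mathfrak G(A,B))=\|\mathfrak G(A,B)\|$. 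Hence one simply approximates the norm $\|\mathfrak G(A,B)\|$ to within $\tfrac14$ using the computable presentation and accepts when the estimate exceeds $\tfrac12$; for soundness, if $\|\mathfrak G(A,B)\|>\tfrac12$ then some state $\phi$ witnesses $\val(\mathfrak G,\sigma)>\tfrac12$. No enumeration of states, no ultrapower-of-states argument.

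A second, smaller point: you write ``pick the witnessing $\mathcal B(\mathcal H)$-strategy to be $(\delta_0-\epsilon')$-op-almost commuting''. The definition of $\val^{co}_{\delta_0,\op}$ gives you no such strategy; the supremum may force the commutator sums arbitrarily close to $\delta_0$. What actually works (and what the paper uses implicitly) is that any \emph{particular} witnessing strategy satisfies the strict inequality $\sum_{a,b}\|[A^x_a,B^y_b]\|<\delta_0$ with some positive slack depending on that strategy; passing to $C^\u$ preserves these norms exactly, and then one chooses the rational perturbation in $C$ small relative to that slack. So the bookkeeping is per-strategy, not uniform in $\epsilon'$.
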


\begin{proof}
Suppose that $C$ is a locally universal \cstar-algebra with a computable presentation $C^\#$.  Fix a computable function $\delta:\bb N\to [0,1]$ and suppose that $L$ belongs to $\mip^{co}_{\delta,\op}$.  Here is the algorithm that shows that $L$ is recursively enumerable.  Suppose that one inputs the sequence of bits $z$.  Set $\mathfrak G:=\mathfrak G_{z}$.  Start enumerating pairs of $\delta(|z|)$-almost commuting $\mathfrak G$-measurements $(A,B)$ in $C$ that consist only of rational points of $C^\#$; this is possible since the presentation is computable.  If $(A,B)$ is such a $\delta(|z|)$-op-almost commuting pair, approximate $\|\mathfrak G(A,B)\|$ to within error $\frac{1}{4}$.  If this approximation exceeds $\frac{1}{2}$, then declare that $z\in L$.

Here is why the algorithm works.  We first show that if $z\in L$, then the algorithm will tell us so.  As above, set $\mathfrak G:=\mathfrak G_z$.  Fix $\epsilon>0$ small enough and let $\sigma:=(A,B,\phi)$ be a $\delta(|z|)$-$\op$-almost commuting $\mathfrak G$-strategy in $\cal B(\cal H)$ such that $\val(\mathfrak G,\sigma)>1-\epsilon$.  It follows that $\|\mathfrak G(A,B)\|>1-\epsilon$. Let $D$ be the \cstar-algebra generated by the coordinates of $A$ and $B$ and consider an embedding of $D$ into $C^\u$.  It follows from Lemma \ref{definable} that there are $\delta(|z|)$-$\op$-almost commuting $\mathfrak G$-measurements $\bar A$ and $\bar B$ in $C$ for which  $\|\mathfrak G(\bar A,\bar B)\|>1-2\epsilon$.  Without loss of generality, one can assume that the coordinates of $\bar A$ and $\bar B$ are rational points of $C^\#$.  If $\epsilon$ is small enough, then approximating $\|\mathfrak G(\bar A,\bar B)\|$ to within error $\frac{1}{4}$ will exceed $\frac{1}{2}$.  Thus, our algorithm will eventually tell us that $z\in L$.

We now check that the algorithm makes no mistakes, that is, if the algorithm tells us that $z\in L$, then in fact $z$ does belong to $L$.  If the algorithm tells us that $(A,B)$ is a $\delta(|z|)$-$\op$-almost commuting pair of $\mathfrak G$-measurements in $C$ for which $\|\mathfrak G(A,B)\|>\frac{1}{2}$, then there will be some state $\phi$ on $C$ such that $\phi(\mathfrak G(A,B))>\frac{1}{2}$. Setting $\sigma:=(A,B,\phi)$, we see that $\val^{co}_{\op,\delta}(\mathfrak G)>\frac{1}{2}$.  Consequently, $z\in L$, as desired.
\end{proof}

We note two things about Theorem \ref{KEP4}.  First, as is the case for $\mip^{co}$, one can show that every language in $\mip^{co}_{\delta,\op}$ is coRE (regardless of the truth of KEP), whence the theorem implies that every language in $\mip^{co}_{\delta,\op}$ is actually decidable provided that KEP holds.  Second, the class $\mip^{co}_{\delta,\op}$ differs from the class $\mip^{co}_{\delta}$ introduced by Coudron and Slofstra in \cite{CS}; in particular, every language in their class is decidable without any KEP assumption. 


\end{document}